\documentclass[12pt,twoside,reqno]{amsart}
\usepackage{amsmath,amssymb,mathrsfs,graphicx,bbm,amsfonts,amsthm, bm}
\usepackage [latin1]{inputenc}
\usepackage{color}
\usepackage{todonotes}
\usepackage{comment}

\usepackage{graphicx}
\usepackage{dsfont}

\DeclareMathAlphabet{\mathbfit}{OML}{cmm}{b}{it}

\makeatletter
\@addtoreset{equation}{section}
\makeatother

\theoremstyle{thmstyleone}%
\newtheorem{theorem}{Theorem}
\theoremstyle{thmstyletwo}%

\newtheorem{example}{Example}%
\newtheorem{remark}{Remark}%
\newtheorem{lemma}{Lemma}[section]%
\newtheorem{corollary}{Corollary}%
\newtheorem{APPcorollary}{Corollary}[section]
\newtheorem*{claim*}{Claim}

\theoremstyle{thmstylethree}%
\newtheorem*{definition*}{Definition}

 \def \R {{\mathbb R}}
 
 \def \N {{\mathbb N}}
 \def \P {{\mathbb P}}
 
 \def \E {{\mathbb E}}

 \def \cG {\mathcal{G}}
 
 \def \cL {\mathcal{L}}

 \def \cT {\mathcal{T}}

 \def \z {{\z}}

 \def \z {{\zeta}}

\def \à {{\`{a}}}
\def \ì{{\`{\i}}}
\def \ò{{\`{o}}}
\def \è{{\`{e}}}
\def \ù{{\`{u}}}

\def \fG {\mathfrak G}

\usepackage{mathtools}
\usepackage{enumitem}

\usepackage{physics}
\usepackage{hyperref}

\def\df{\,\mathrm{d}}
\def\exp{\mathrm{e}^}

 \def \brange{{(0,2)\hspace{-0.1cm}\setminus\hspace{-0.1cm} \{1\}}}

 \usepackage{cases}

\begin{document}
\title[Double-transform Tauberian method for precise large deviations]{Double-transform Tauberian method\\  for precise large deviations}

\author{Giampaolo Cristadoro$^{1}$}
\author{Gaia Pozzoli$^{1}$}

\address{$^{1}$Dipartimento di Matematica e Applicazioni,
Universit\`a degli Studi di Milano-Bicocca,
Via R. Cozzi 55, 20125 Milano, Italy.}

\email{giampaolo.cristadoro@unimib.it}
\email{gaia.pozzoli@unimib.it}

\date{}

\begin{abstract}
In many stochastic models, the observables of interest are naturally encoded in double transforms (e.g., Laplace transforms) that couple spatial and temporal variables. Notably, the double transform often provides the only analytically tractable starting point for the study of processes with correlated increments or path constraints. We extend the Tauberian approach for precise large deviations of stochastic processes belonging to the domain of attraction of spectrally positive stable laws, previously developed for single-variable Laplace--Stieltjes transforms \cite{CP2024}, to the bivariate setting. This methodology provides a direct route to asymptotic behaviour that is otherwise difficult to characterize using single-transform techniques. As illustrative examples, we derive precise large deviations for random sums with increments correlated to the stopping time and for path-dependent observables of random walks constrained to remain positive.

    \par\bigskip\noindent
    {\it MSC 2010:} primary: 60F10, 40E05; secondary: 60G50, 60G52, 82D99, 82B41.
    \par\smallskip\noindent
    {\it Keywords:} Precise large deviations, Regular variation, Tauberian theorems, Random walks.
    \end{abstract}

\maketitle

\section{Introduction}\label{sec:Intro}
The theory of large deviations investigates the asymptotic behaviour of probabilities of rare events and plays a central role across probability theory, statistical physics, and applied domains such as finance, queueing systems, and disordered media. Classical approaches, originating from Cram\'er's seminal work~\cite{Cramer1938}, rely on the analysis of moment generating functions and yield results at logarithmic scale under analyticity assumptions. When these assumptions fail, notably in the presence of heavy-tailed distributions, different mechanisms govern rare events, leading to precise large deviation principles typically driven by the so-called big-jump phenomenon.
 In contrast with the Cram\'er setting, however, the role played by the characteristic function of the random variable of interest does not emerge naturally from the classical techniques.

In our recent work~\cite{CP2024}, we introduced a novel approach to precise large deviations based on a uniform Tauberian theorem for families of Laplace--Stieltjes transforms. This method provides a framework for distributions in the domain of attraction of spectrally positive stable laws that reveals the role of non-analytic transforms, in analogy with the classical Cram\'er theory. The key feature of this approach is the possibility of deriving large deviation principles from a uniform control of a one-parameter family of Laplace--Stieltjes transforms near the origin.

The present work fits into the program initiated in \cite{CP2024}, where a key feature was that the dependence on the parameter indexing the family was treated separately from the transform variable. In many stochastic models, however, such a  family is not provided in an explicit form. On the contrary, the relevant quantities are often encoded directly  into transforms depending simultaneously on multiple variables, where the dependence is intrinsically coupled. Crucially, for these transforms no factorization or independent inversion is readily available. As a consequence, the analysis cannot be reduced to a single-parameter transform. This situation arises, for instance, in prototypical models with intrinsic correlations between increments and stopping mechanisms.

This lack of decoupling represents a major obstruction to the direct application of the uniform Tauberian framework developed in the one-parameter setting. Indeed, even when each marginal transform exhibits a regular asymptotic behaviour, the joint transform may contain mixed terms that cannot be controlled by separate arguments. As a consequence, the inversion with respect to one variable cannot be performed independently of the other, preventing a straightforward reduction to the results of \cite{CP2024}.

The main goal of this paper is to overcome this limitation by developing a Tauberian theory for families of double transforms. More precisely, we establish a uniform Tauberian theorem for a class of two-parameter transforms that allows us to extract asymptotic information on the underlying distributions despite the presence of coupling. This result extends the classical Tauberian paradigm and provides a tool to handle models where the interplay between space and parameter is essential.

Our approach shows that, in contrast with the single-transform case, the correct object of study is the full double transform, whose asymptotic behaviour must be properly controlled in a joint regime. This perspective is not merely technical: it reflects the intrinsic structure of several stochastic models with correlations or path-dependent features. In particular, we exhibit explicit examples where any attempt to decouple the transform fails, while the double-transform method yields precise large deviation estimates.

Among the applications, we consider randomly stopped random walks with dependent increments or path-dependent observables of random walks conditioned to stay positive.
In both these models the double transform arises naturally (for example from renewal-type equations), and encodes the full probabilistic structure of the system. Our results provide precise asymptotics for the corresponding large deviation probabilities, covering regimes that are inaccessible by previous techniques.

The paper is organized as follows. In Section~\ref{sec:UTT} we introduce the class of double transforms under consideration and state the main Tauberian theorem. Section~\ref{sec:Examples} is devoted to applications, including stopped random walks with correlated increments or path-dependent observables of random walks conditioned to stay positive.
The proofs are presented in Section~\ref{sec:proofs}.

\section{Uniform Tauberian theorem for double transforms}\label{sec:UTT}
Consider a family of Laplace--Stieltjes transforms $(\hat G_t(s))_{t\geq0}$ of real-valued functions $(G_t(x))_{t\geq0}$ locally of bounded variation on $\R$  
\[
\hat G_t(s)\coloneqq \int_{-\infty}^\infty \exp{-sx} \df G_t(x),
\]
that are assumed to exist in a right-neighborhood of the origin $s\in (0,\varepsilon)$, with $\varepsilon>0$ independent of $t$.
Let us now introduce the Laplace pair $\lambda$ of the parameter $t$ and the corresponding double transform
\[
\hat {\mathfrak G} (\lambda,s)\coloneqq \int_0^\infty \exp{-\lambda t}\hat G_t(s)\df t,
\]
that is assumed to be well-defined for $ \lambda\in(0,\infty)$. 
We  establish a Tauberian theorem for the family of double Laplace--Stieltjes transforms $(\hat {\mathfrak G} (\lambda,s))_{\lambda\in\R^+}$.
\begin{theorem}\label{thm:doubleLaplace}
Let $G_t(x)$ be non-decreasing $\forall t\geq0$ with $\lim_{x\to-\infty}G_t(x)=0$. Let $\hat G_t(s)$ be non-decreasing\footnote{The monotonicity assumption can be relaxed to $\hat G_t(s)$ non-negative and eventually non-decreasing in $t$ uniformly for $s\in(0,\varepsilon)$.} in $t$ for all $s\in(0,\varepsilon)$. Suppose that there exists $(s_\lambda)_{\lambda>0}$ with $s_\lambda>0$ for all $\lambda$ and $s_\lambda\to 0$ as $\lambda \to 0$ such that for some $\alpha>0$, $\gamma>1$ and $\ell(x)$, $L(t)$ positive slowly varying functions at infinity we have\footnote{Recall that, for any $\gamma\neq 0$, we can assume that $\ell(1/s)s^\gamma$ is monotone without loss of generality by the monotone equivalents theorem~\cite[Thm.~1.5.3]{bingham}.}
\begin{equation}\label{eq:hypLap}
\forall \theta>0,\qquad\lim_{\lambda\to0}\sup_{0<s\leq \theta s_\lambda}\left| \frac{\hat{\mathfrak G}(\lambda,s)}{L(1/\lambda)\lambda^{-\gamma}s^{-\alpha}\ell(1/s)} - 1 \right|=0.
\end{equation}
Then, defining $x_t\coloneqq 1/ s_{\frac 1 t}$,
\begin{equation}
\lim_{t\to\infty} \sup_{x\geq x_t}\left|\Gamma(\gamma)\Gamma(\alpha+1) \frac{G_t(x)}{L(t)t^{\gamma-1}\ell(x)x^\alpha}- 1 \right| =0.
\end{equation}
\end{theorem}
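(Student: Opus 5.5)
The plan is to reduce the statement to two successive one-dimensional Tauberian inversions, made uniform by monotonicity and by a continuity-theorem argument on a rescaled family of measures. For fixed $x$, the natural object is the time-integrated distribution
\[
\mathcal G_T(x)\coloneqq\int_0^T G_t(x)\df t,
\]
which is non-decreasing both in $T$ and in $x$. Its double Laplace--Stieltjes transform in $(T,x)$ is $\lambda\,\hat{\mathfrak G}(\lambda,s)$ after integrating by parts in $T$. We pass to rescaled variables $u=\lambda t$ and $y=sx$ and introduce the measures
\[
\mu_\lambda(\df u,\df y)\coloneqq\frac{\df_t\df_x G_t(x)\,\big|_{t=u/\lambda,\;x=y/s_\lambda}}{L(1/\lambda)\lambda^{-\gamma}\,s_\lambda^{-\alpha}\ell(1/s_\lambda)}.
\]
By \eqref{eq:hypLap}, together with the uniform convergence theorem for $\ell$, the joint Laplace transform of $\mu_\lambda$ evaluated at $(1,\sigma)$ converges to $\sigma^{-\alpha}$, uniformly for $\sigma\in(0,\theta]$ and for every $\theta$. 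This is the transform of $\df u\otimes y^{\alpha-1}\df y/\Gamma(\alpha)$ (up to the normalisation in $u$). The mass at $x<0$ is harmless because $G_t(-\infty)=0$ and $s>0$.

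The first step is an inversion in $s$, carried out along the diagonal $\lambda=1/t$. Since the hypothesis only controls $\lambda$-transforms at the single point $\lambda$ for each scale, the $t$-information is extracted through a Karamata-type argument. For $\lambda'=c\lambda$ with $c$ ranging over a compact subset of $(0,\infty)$, we have $s\le\theta s_\lambda$ if and only if $s\le\theta' s_{\lambda'}$, provided $s_\lambda$ is regularly varying. If it is not, the plan is to exploit the freedom in $\theta$, by taking $\theta\to\infty$ and noting that the statement concerns all $x\ge x_t$. Either way, for each fixed $s$-ratio the map $c\mapsto\hat{\mathfrak G}(c\lambda,s)/\hat{\mathfrak G}(\lambda,s)$ converges to $c^{-\gamma}$. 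Since $t\mapsto\hat G_t(s)$ is non-decreasing, Karamata's Tauberian theorem, applied uniformly in $s$ as in \cite{CP2024}, gives
\[
\hat G_t(s)\sim \frac{L(t)t^{\gamma-1}s^{-\alpha}\ell(1/s)}{\Gamma(\gamma)}\qquad\text{uniformly for } 0<s\le\theta/x_t .
\]
The uniformity follows from the standard sandwich argument: the monotone function is bounded between smoothed versions $\lambda\int \mathrm{e}^{-\lambda t}(\cdots)$, and the approximating polynomials in $\mathrm{e}^{-u}$ of the indicator of $[0,1]$ involve only finitely many values $c\lambda$. The key estimate to check here is that the substitution $\lambda\to c\lambda$ keeps $s$ inside the admissible window $(0,\theta s_{c\lambda}]$ uniformly. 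I expect this to be the main obstacle, since nothing is assumed about the regular variation of $s_\lambda$. I would first try to show that the hypothesis, holding for every $\theta$, forces enough regularity, or else to restrict to $c\in[1,1+\delta]$ and use the monotonicity of $s\mapsto\ell(1/s)s^{-\alpha}$ from the footnote.

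The second step is to invoke the uniform Tauberian theorem of \cite{CP2024}. This applies to the family $\hat G_t(s)$ once the previous display holds uniformly on $s\le\theta s_{1/t}$ for all $\theta>0$. That theorem yields $G_t(x)\sim \hat G$-normalisation$/\Gamma(\alpha+1)$ uniformly for $x\ge x_t=1/s_{1/t}$, which combines to
\[
G_t(x)\sim\frac{L(t)t^{\gamma-1}\ell(x)x^\alpha}{\Gamma(\gamma)\Gamma(\alpha+1)},
\]
which is the claim. The final check is that the constants $\Gamma(\gamma)$ from the $t$-inversion (where $\gamma>1$ ensures $t^{\gamma-1}$ growth, so that Karamata applies to a non-decreasing function with index $\gamma-1>0$) and $\Gamma(\alpha+1)$ from the $x$-inversion multiply as stated.
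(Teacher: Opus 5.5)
Your plan follows the same route as the paper's proof. First comes a uniform Karamata argument in $\lambda$: the polynomial sandwich of $j_c$, which only involves the values $\hat{\fG}((k+1)\lambda,s)$ for $0\le k\le D_c(\epsilon)$. Then comes a monotone-density step in $t$, and finally \cite[Thm.~1]{CP2024} in $s$.

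The obstacle you single out is genuine, and it is exactly the point the paper's proof passes over. The paper applies \eqref{eq:hypLap} to $\hat{\fG}((k+1)\lambda,s)$ uniformly over $0<s\le\theta s_\lambda$. The hypothesis at the point $(k+1)\lambda$, however, only covers $0<s\le\theta s_{(k+1)\lambda}$.

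None of your remedies closes this gap:
\begin{enumerate}
\item Letting $\theta\to\infty$ fails, since \eqref{eq:hypLap} is not uniform in $\theta$ and $s_\lambda/s_{m\lambda}$ may be unbounded.
\item Restricting to $c\in[1,1+\delta]$ is beside the point. The dilation $c$ is absorbed by $j_c$ at fixed $\lambda$, while the polynomials force the use of $(k+1)\lambda$ for every $k\le D_c(\epsilon)$. Monotonicity of $\ell(1/s)s^{-\alpha}$ says nothing about $\hat{\fG}(m\lambda,\cdot)$ outside its window.
\item The hypothesis cannot force regularity of $(s_\lambda)$, because without it the statement is false.
\end{enumerate}

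Here is a sketch of a counterexample. Take $L=\ell\equiv1$ and $G^0_t(x)=t^{\gamma-1}x_+^{\alpha}/(\Gamma(\gamma)\Gamma(\alpha+1))$, for which $\hat{\fG}^0(\lambda,s)=\lambda^{-\gamma}s^{-\alpha}$ exactly. Set
\[
G_t(x)=G^0_t(x)\Bigl(1+\epsilon\sum_{n}\psi(t/t_n)\,\chi(\sigma_n x)\Bigr),
\]
with the following ingredients:
\begin{itemize}
\item $\psi\in C_c^\infty((\tfrac12,4))$ with $\int_0^\infty \mathrm{e}^{-u}u^{\gamma-1}\psi(u)\df u=0$ and $\psi(1)\neq0$;
\item $0\le\chi\in C_c^\infty((1,2))$ with $\chi(\tfrac32)=1$;
\item $t_n\uparrow\infty$ very sparse and $\sigma_n\downarrow0$;
\item $\epsilon$ small enough that $G_t(\cdot)$ and $t\mapsto G_t(x)$ stay non-decreasing (this uses $\gamma>1$ and $\alpha>0$).
\end{itemize}
The relative error of $\hat{\fG}$ is exactly $\epsilon\sum_n T_n(\lambda)X_n(s)$. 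Here $T_n$ is a bounded function of $\lambda t_n$ alone, which vanishes at $\lambda t_n=1$ and is negligible unless $\lambda t_n\asymp1$. The factor $X_n$ satisfies $|X_n(s)|\le\min\{1,C(s/\sigma_n)^{\alpha+1}\}$. Set $s_{1/t_n}=\sigma_n$, and $s_\lambda=\sigma_n/n$ for the remaining $\lambda$ with $\lambda t_n\in(\sqrt{t_n/t_{n+1}},\sqrt{t_n/t_{n-1}}]$. Then \eqref{eq:hypLap} holds for every $\theta>0$, yet $G_{t_n}(\tfrac32x_{t_n})=(1+\epsilon\psi(1))\,G^0_{t_n}(\tfrac32x_{t_n})$ for all $n$.

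What is missing is a mild extra assumption on $(s_\lambda)$. Only the points $m\lambda$ with $m\in\N$, $m\ge1$, ever enter the argument. So it suffices that $\limsup_{\lambda\to0}s_\lambda/s_{m\lambda}<\infty$ for each such $m$; for instance, $s_\lambda$ non-decreasing near $0$. Then $(0,\theta s_\lambda]\subset(0,\theta' s_{m\lambda}]$, and the uniform Karamata step is valid as written. This costs nothing in the applications of Section~\ref{sec:Examples}, where an admissible $(x_t)$ can be replaced by a non-decreasing admissible minorant.

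With this assumption added, your argument coincides with the paper's, provided you make the density step explicit. Karamata only gives $\fG(ct;s)$. You recover $\hat G_t(s)$ from
\[
\int_{(1-\epsilon)t}^{t}\hat G_y(s)\df y\le\epsilon t\,\hat G_t(s)\le\int_t^{(1+\epsilon)t}\hat G_y(s)\df y,
\]
using $c\in\{1,1\pm\epsilon\}$ at the same $\lambda=1/t$. This is where monotonicity in $t$ enters.

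Two minor points. Your ``first step'' is an inversion in $\lambda$, not in $s$. The rescaled measures $\mu_\lambda$ play no role, and their transforms at $\lambda$-argument $1$ alone would not identify the limit in $u$.
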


\begin{remark}\label{rmk:lit}
A formulation of Tauberian results for the double Laplace transform can be found in~\cite{HaanOmeyResnick1984, Diamond1987, Omey1989}. However, the problem addressed in this article requires a finer control, which is not provided by the available results in the literature. Indeed, the existing techniques\,---\,involving regular variation in dimension $2$\,---\,allow one to control only families $(\tilde x_t)_{t\geq 0}$ with $\tilde x_t \geq x_t$ that are regularly varying with respect to the parameter $t$, thereby preventing a uniform control over the whole region $x\in [x_t,\infty)$ (refer to Lemma~\ref{lem:eqPointUnif}).
\end{remark}

As a corollary, an analogue result is derived for power series: given a family  $(\hat G_n(s))_{n\in\N}$ of non-negative transforms with discrete parameter $n$, we can introduce the family of generating functions
\begin{equation}
\hat{\mathcal G}(z,s)\coloneqq \sum_{n=0}^\infty z^n \hat G_n(s),
\end{equation}
that is assumed to converge for $z\in[0,1)$.
\begin{corollary}\label{cor:Lap}
Let $G_n(x)$ be non-decreasing $\forall n\in\N$ with $\lim_{x\to-\infty}G_n(x)=0$. Let $\hat G_n(s)$ be non-decreasing in $n$ for all $s\in(0,\varepsilon)$. Suppose that there exists $(s_z)_{z \in [0,1)}$ with $s_z\to 0$ as $z\to1$ such that for some $\alpha>0$, $\gamma>1$ and $\ell(x)$ positive slowly varying function at infinity we have
\begin{equation}
\forall \theta>0,\qquad\lim_{z\to 1}\sup_{0<s\leq \theta s_z}\left| \frac{\hat{\mathcal G}(z,s)}{(1-z)^{-\gamma}s^{-\alpha}\ell(1/s)} - 1 \right|=0.
\end{equation}
Then, defining $ x_n\coloneqq 1/ s_{1-\frac 1 n}$,
\begin{equation}
\lim_{n\to\infty} \sup_{x\geq x_n}\left|\Gamma(\gamma)\Gamma(\alpha+1) \frac{G_n(x)}{n^{\gamma-1}\ell(x)x^\alpha}- 1 \right| =0.
\end{equation}
\end{corollary}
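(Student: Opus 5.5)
The plan is to reduce Corollary~\ref{cor:Lap} to Theorem~\ref{thm:doubleLaplace} by embedding the discrete family into a continuous one through a step-function interpolation. For $t\ge0$ set $G_t(x)\coloneqq G_{\lfloor t\rfloor}(x)$, so that $\hat G_t(s)=\hat G_{\lfloor t\rfloor}(s)$. Each $G_t$ is then non-decreasing in $x$ with $\lim_{x\to-\infty}G_t(x)=0$, and $\hat G_t(s)$ is non-decreasing in $t$ for every $s\in(0,\varepsilon)$, because $\hat G_n(s)$ is non-decreasing in $n$. The double transform is computed directly:
\[
\hat{\mathfrak G}(\lambda,s)=\sum_{n\ge0}\hat G_n(s)\int_n^{n+1}\exp{-\lambda t}\df t=\frac{1-\exp{-\lambda}}{\lambda}\,\hat{\mathcal G}(\exp{-\lambda},s).
\]
It converges for $\lambda>0$ since $\exp{-\lambda}\in(0,1)$.

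Next, verify \eqref{eq:hypLap} with $z=\exp{-\lambda}$ and $s_\lambda\coloneqq s_{\exp{-\lambda}}$; note that $s_\lambda\to0$ as $\lambda\to0$.
\begin{itemize}
\item As $\lambda\to0$ we have $(1-\exp{-\lambda})/\lambda\to1$ and $(1-z)^{-\gamma}=(1-\exp{-\lambda})^{-\gamma}\sim\lambda^{-\gamma}$. Both factors are independent of $s$.
\item Hence the uniform hypothesis of the corollary, taken over $0<s\le\theta s_z$, yields \eqref{eq:hypLap} with $L\equiv1$.
\end{itemize}
Theorem~\ref{thm:doubleLaplace} then gives
\[
\lim_{t\to\infty}\sup_{x\ge\tilde x_t}\left|\Gamma(\gamma)\Gamma(\alpha+1)\frac{G_{\lfloor t\rfloor}(x)}{t^{\gamma-1}\ell(x)x^\alpha}-1\right|=0,
\qquad \tilde x_t\coloneqq 1/s_{\exp{-1/t}}.
\]
Restrict to integers $t=n$.

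The main obstacle is a mismatch of thresholds. The corollary defines $x_n=1/s_{1-1/n}$, whereas the theorem produces $\tilde x_n=1/s_{\exp{-1/n}}$. Since $1-1/n$ and $\exp{-1/n}$ are not equal, I need $\tilde x_n\le x_n$ asymptotically, or at least to be able to absorb the discrepancy.

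First remedy: choose the time parametrization so that the thresholds coincide. Define $s_\lambda\coloneqq s_{z(\lambda)}$ with $z(\lambda)\coloneqq 1-\lambda$ for small $\lambda$; then $s_{1/n}=s_{1-1/n}$ exactly. This requires the corollary's hypothesis, evaluated at $z=1-\lambda$, to control $\hat{\mathcal G}(\exp{-\lambda},s)$. Since $1-\exp{-\lambda}=\lambda(1+O(\lambda))$, the points $z=1-\lambda$ and $z'=\exp{-\lambda}$ are close, and I compare $\hat{\mathcal G}(z',s)$ with $\hat{\mathcal G}(z,s)$ by monotonicity in $z$. Non-negativity gives $\hat{\mathcal G}(z,s)\le\hat{\mathcal G}(z',s)$ for $z\le z'$. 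For a bound the other way I sandwich $z'$ between $1-\lambda$ and $1-\lambda(1-\delta)$. The hypothesis at $1-\lambda(1-\delta)$ holds uniformly for $s\le\theta s_{1-\lambda(1-\delta)}$. To compare this range with $s\le\theta s_{1-\lambda}$ I need $s_z$ to be roughly comparable at nearby $z$.

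This comparability is not given, so the fallback is to exploit the freedom in $\theta$. The hypothesis holds for every $\theta>0$, and the theorem's conclusion is likewise insensitive to the choice of threshold scale. Indeed, in the theorem's proof, replacing $s_\lambda$ by $c\,s_\lambda$ only rescales $x_t$ by a constant factor, provided the conclusion can be extended from $x\ge Cx_t$ down to $x\ge x_t$. I would also monotonize $s_z$, which costs nothing since the hypothesis only involves the range $(0,\theta s_z]$. With that in place, the sandwich produces bounds $(1\pm O(\delta))\lambda^{-\gamma}s^{-\alpha}\ell(1/s)$ uniformly, and letting $\delta\to0$ completes the reduction.
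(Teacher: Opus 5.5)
Your first two steps are exactly the paper's proof: the interpolation $\hat G_t=\hat G_{\lfloor t\rfloor}$, the identity \eqref{eq:cor1}, and Theorem~\ref{thm:doubleLaplace} with $s_\lambda:=s_{\mathrm e^{-\lambda}}$ and $L\equiv 1$. You are right that, read literally, this gives the threshold $1/s_{\mathrm e^{-1/n}}$ rather than $x_n=1/s_{1-1/n}$; the paper's one-line proof passes over this. Your repair, however, does not work.

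Monotonizing $s_z$ is not free. A monotone minorant keeps the hypothesis but enlarges $1/s_{1-1/n}$, so you would prove a weaker statement; a majorant is not covered by the hypothesis. More seriously, even for $s_z$ non-increasing in $z$, the upper half of your sandwich uses the hypothesis at $z''=1-(1-\delta)\lambda$, which is \emph{closer} to $1$ than $1-\lambda$, on the range $0<s\le\theta s_{1-\lambda}$. That requires $\limsup_{\lambda\to0}s_{1-\lambda}/s_{1-(1-\delta)\lambda}<\infty$, which is the reverse of what monotonicity gives. The freedom in $\theta$ only absorbs bounded ratios.

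The structure of $\hat{\mathcal G}$ cannot substitute for this. For $z\le z'$, non-negativity of the coefficients gives $\hat{\mathcal G}(z,s)\le\hat{\mathcal G}(z',s)$, and monotonicity of $\hat G_n$ in $n$ gives $(1-z)\hat{\mathcal G}(z,s)\le(1-z')\hat{\mathcal G}(z',s)$. So information at $1-\lambda$ only bounds $\hat{\mathcal G}(\mathrm e^{-\lambda},s)$ from below. Your closing ``letting $\delta\to0$'' therefore rests on an unproved comparability of $s_z$ at nearby points. That comparability fails, for instance, for $s_{1-\lambda}=\mathrm e^{-1/\lambda}$.

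The mismatch can be removed exactly, with no assumption on $s_z$ beyond those of Theorem~\ref{thm:doubleLaplace}, because the theorem's conclusion holds as $t\to\infty$ through all real values.

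First, interpolate with the ceiling, $\hat G_t(s):=\hat G_{\lceil t\rceil}(s)$, which is still non-decreasing in $t$. Then $\hat{\mathfrak G}(\lambda,s)=\frac{\mathrm e^{\lambda}-1}{\lambda}\bigl[\hat{\mathcal G}(\mathrm e^{-\lambda},s)-\hat G_0(s)\bigr]$. Since $\hat G_n\ge\hat G_0\ge0$, you have $\hat G_0(s)\le(1-z)\hat{\mathcal G}(z,s)$, so the correction is negligible uniformly on $0<s\le\theta s_{\mathrm e^{-\lambda}}$. Hence \eqref{eq:hypLap} holds with $s_\lambda:=s_{\mathrm e^{-\lambda}}$.

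Second, evaluate the theorem's conclusion along $t_n:=1/\log\frac{n}{n-1}$.
\begin{itemize}
\item Then $\mathrm e^{-1/t_n}=1-\frac1n$, so the threshold is exactly $x_n$.
\item The inequalities $\frac1n<\log\frac{n}{n-1}<\frac1{n-1}$ give $t_n\in(n-1,n)$, hence $\lceil t_n\rceil=n$ and $G_{\lceil t_n\rceil}=G_n$.
\item Since $t_n/n\to1$, you may replace $t_n^{\gamma-1}$ by $n^{\gamma-1}$.
\end{itemize}
With the floor interpolation, the same evaluation would give the statement for $G_{n-1}$; this is why the ceiling is the right choice here.
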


Theorem~\ref{thm:doubleLaplace} and Corollary~\ref{cor:Lap} parallel the results of~\cite[Thm.~2]{CP2024}. While this new framework encompasses several applications
 from~\cite{CP2024}, the two approaches are not equivalent. In correlated settings, the fine control of the double transforms necessary to apply the one-parameter results of~\cite{CP2024} is generally lacking; see Section~\ref{sec:Examples}.
\newline

To elucidate the mechanics of our double-transform approach in a simplified setting, we first consider the benchmark case of sums of i.i.d.\ random variables. Although this classical setup lacks the inherent coupling that motivates our general theory, it serves as a transparent toy model to show how the joint asymptotic control of the double transform recovers well-known large deviation results.
\begin{example}\label{ex:IID}
Let $(X_k)_{k\in\N}$ be an i.i.d.\ sequence of $\R^+$-valued random variables with common distribution $F_+(x)$ in the domain of attraction\footnote{By~\cite[Thm.~2.6.1]{Ibragimov}, for all $C>0$ we have $\lim_{x\to\infty}\bar F_+(Cx)/\bar F_+(x)= C^{-\beta}$, where $\bar F_+(x)\coloneqq 1-F_+(x)$, namely $\bar F_+$ is regularly varying of index $-\beta$.} of a $\beta$-stable law, with $\beta\in\brange$. Let $(\tilde X_k)_{k\in\N}$ be the corresponding possibly centred random variables
\[
\tilde X_k\coloneqq \begin{cases}
X_k\qquad &\text{if}\quad \beta \in (0,1),\\
X_k-\mu\qquad&\text{if}\quad \beta \in(1,2), 
\end{cases}\qquad k\in\N,
\]
where $\mu\coloneqq \E[X_1]$, and denote by $F(x)$ the common distribution function. 
Let $\hat F(s)$ be the Laplace--Stieltjes transform of $F(x)$.
Note that defining  $\hat F(s)\coloneqq 1+a(s)$  we have that    $a(s)=-\Gamma(1-\beta)\ell(1/s)s^\beta+o(\ell(1/s)s^\beta)$ as $s \to 0^+$ with $\ell$ positive slowly varying function at infinity. It follows that  $-1<a(s)<0$ if $\beta\in(0,1)$, while $a(s)>0$ if $\beta\in(1,2)$ and $s$ is small enough; see~\cite[Appendix~B.1]{Bianchi2022} for a brief overview.
Take the  partial sums $S_n\coloneqq \sum_{k=1}^n  \tilde X_k$, with distribution funciton $F_n(x)$.  We denote the tail distributions by $\bar{F}_n(x)\coloneqq 1-F_n(x)$. 

Consider firstly the case $\beta \in (0,1)$. We will apply Corollary~\ref{cor:Lap}  to the family $G_n(x):=\int_0^x \bar{F}_n(y)dy$.
Note  that $\hat G_n(s) = \frac{1-\hat F_n(s)}s $ is non-decreasing in $n$ for all $s\in(0,\epsilon)$, with $\epsilon>0$ arbitrarily small,  given that $\hat F_n(s)=[\hat F(s)]^n$. 

It is immediate to realize that, for $|\frac z{1-z}a(s) |<1$
\begin{eqnarray}
\hat \cG(z,s)
&\coloneqq& \sum_{n=0}^\infty z^n \hat G_n(s)=\frac1{s(1-z)}\left[ 1- \frac{1-z}{1-z\hat F (s)}\right]\nonumber\\
&=&\frac1{s(z-1)}\sum_{k=1}^\infty \left(\frac z {1-z}a(s)\right)^k.\label{eq:doubleIID}
\end{eqnarray}
Choose $s_z$ such that $a(s_z)/(1-z)\to 0$ as $z\to 1$. Note that, for  all fixed  $\theta>0$,   $\hat{\mathfrak G}(z, \tilde s_z) \to 1$ 
for any $(\tilde s_z)_{z\in[0,1)}$ such that  $\tilde s_z\leq  \theta s_z$. By means of Corollary~\ref{cor:eqPointUnif}, we thus have 
\[
\lim_{z\to 1 }\sup_{0<s\leq \theta s_z}\left| \frac{(1-z)^2\hat{\mathcal G}(z,s)}{ \Gamma(1-\beta)\ell(1/s)s^{-(1-\beta)}}-1\right|=0,\qquad\forall \,\theta>0,
\]
which satisfies the hypotheses of Corollary~\ref{cor:Lap}. By applying Lemma~\ref{lem:MDT} to $G_n(x)$, noting that we can replace $s_z$ with $s_z/(1+\eta)$ for any $\eta>0$, we immediately conclude that
\begin{equation}\label{eq:LDforIID}
\lim_{n\to\infty}\sup_{x\geq x_n}\left| \frac{\bar F_n(x)}{n\ell(x)x^{-\beta}}-1\right|=\lim_{n\to\infty}\sup_{x\geq x_n}\left| \frac{\bar F_n(x)}{n\bar F(x)}-1\right|=0,
\end{equation}
with $ x_n$ such that $n \ell( x_n) x_n^{-\beta}\to 0$ as $n\to\infty$.\footnote{ Recall that this large deviation condition is optimal in the sense that it is at the boundary with the ``natural'' fluctuations of the GCLT.} 

The  large deviation result~\eqref{eq:LDforIID} is also  valid  for $\beta\in(1,2)$ and can be derived analogously with    $\hat G_n(s) = \frac{\hat F_n(s)-1}{s^2} $  and applying Lemma~\ref{lem:MDT} twice.
\end{example}

The same result was obtained in~\cite[Cor.~1]{CP2024} but under a control corresponding to the termwise inversion of the coefficients of $[a(s)]^k$ in~\eqref{eq:doubleIID}, yielding a full asymptotic expansion factorized in $n$ and $s$.

More generally, observe that the control required on the subleading terms in the asymptotic expansion of $\hat{\mathfrak G}(\lambda,s)$ in Theorem~\ref{thm:doubleLaplace} is weaker than requiring control of its expansion as a power series in $s$. The latter would allow one to invert, term by term in $\lambda$, the coefficients and thereby recover uniform control with respect to the parameter $t$.

\section{Applications}\label{sec:Examples}
In this section, we illustrate the practical utility and versatility of the uniform Tauberian framework established in Section~\ref{sec:UTT} by applying it to stochastic models with different underlying structures. Firstly, focusing on first-passage problems and stopped random walks, we show how the double-transform approach naturally emerges from renewal-type equations to encode the full probabilistic structure of the system. Indeed, it captures the intrinsically coupled dynamics, allowing us to derive precise large deviation asymptotics in correlated regimes where traditional decoupling techniques fail.
Secondly, we show that even when maintaining i.i.d.\ increments for the random walk as in Example~\ref{ex:IID}, conditioning on a path-dependent event naturally requires the use of a double transform.

To this end, the following notation proves useful.
Consider the joint control-cost process  $(S,C)\coloneqq (S_n,C_n)_{n\in\N_0}$ defined by a sequence of i.i.d.\ vector-valued increments $(X_k,Y_k)_{k\in\N}\in (\R\times [0,\infty))^{\N}$. The process evolves according to 
\begin{equation}\label{eq:CCprocess}
S_0\coloneqq 0, \quad C_0\coloneqq 0\quad\text{and}\quad S_n\coloneqq\sum_{k=1}^n X_k, \quad C_n\coloneqq \sum_{k=1}^n Y_k \quad\text{for}\quad n\in\N.
\end{equation}
As anticipated, we will consider a non-negative stochastic process $(Z_t)_{t\geq 0}$ constructed using a positive cost $C_n$ in two different ways: in the first set of examples $Z_t$ will denote the cost $C_{\tau(t)}$ (possibly up to a deterministic shift) accumulated  up to a stopping time $\tau(t)$; in the second set $Z_{t}$ will correspond to the cost accumulated while the control variable S is conditioned to stay positive.  Let  $R_t(x)$ denote the distribution function on $[0,\infty)$ associated with the random variable $Z_t$ for each $t\geq 0$ and let $\bar R_t(x)\coloneqq 1-R_t(x)$ be the corresponding tail distribution. We easily observe that \footnote{Note that $G_t(x)=\int_0^x \bar R_t(y)\df y\leq x$, hence the corresponding double transform is well defined $\hat{\mathfrak G}(\lambda, s)\leq (s\lambda)^{-1}<\infty$ for all $s,\lambda \in(0,\infty)$.}
\[
\hat G_t(s)\coloneqq\frac{\hat R_t(0)-\hat R_t(s)}s=\int_0^\infty\exp{-sx}\bar R_t(x)\df x, \qquad s\in(0,\infty),
\]
and $\hat G_t(s)$ is non-decreasing in $t$ as $\bar R_t(x)$ is non-decreasing in $t$ for all $x\in \R^+$.

\subsection{Correlated random sums via renewal-type equations}\label{sec:leapGut}
Fix a threshold level $t\geq 0$, and define the first-ladder epoch 
\begin{equation}\label{eq:FPT}
\tau(t)\coloneqq \inf\{ n>0\,:\, S_n>t\},
\end{equation}
i.e.\ the first-passage time to~$(t,\infty)$ of the control process $S$.
Our aim is to characterize the large deviation probabilities of the cost variable stopped at time $\tau(t)$ i.e. $Z_t:=C_{\tau(t)}$.
In the remainder, all processes are assumed to admit a probability density function.
Let $h_t(c)$ be the probability density function of $Z_t$. 
Furthermore, let $p(x,y)$ denote the joint density of the increments $(X_k,Y_k)_{k\in\N}\in ([0,\infty)\times[0,\infty))^{\N}$. Define $p_Y(x)\coloneqq \int_0^\infty p(x,y)\df x$ as the common marginal density function of the random variables $(Y_k)_{k\in\N}$\,---\,analogously for $p_X(x)$. 
\newline

We now show that  $h_t(c)$  satisfies a renewal-type equation and this will let us write its double transform explicitly. Start by writing the backward equation
\begin{equation}\label{eq:recNotTransf}
h_t(c)=\int_0^t \df x\int_0^\infty \df y\,h_{t-x}(c-y) p(x,y)+\int_t^\infty p(x,c)\df x.
\end{equation}
Intuitively, the first term represents the contribution from trajectories in which a first jump occurs below the threshold $t$, with a corresponding cost $y$; the remaining jumps (collectively constrained to sum up to $t-x$) then fill the gap to reach the total cost $c$. The second term accounts for the complementary case, where the first jump exceeds $t$ and the process is halted at $c$. 
We introduce the Laplace transform $\hat h_t(s)\coloneqq \int_{0}^\infty \exp{-s c}h_t(c)\df c$ with respect to $c$, and by the convolution theorem we get
\begin{align}\label{eq:recCost}
\hat h_t(s)
&=\int_0^t \hat h_{t-x}(s) \hat p(x; s)\df x+\int_t^\infty \hat p(x;s)\df x,
\end{align}
where
\begin{equation}\label{eq:replaced}
\hat p(x; s) \coloneqq \int_{0}^\infty p(x,y)\exp{-sy}\df y.
\end{equation}
If we now take the Laplace transform $\hat{\mathfrak h}(\lambda,s) \coloneqq  \int_0^\infty \exp{-\lambda t}\hat h_t(s)\df t$ with respect to $t$, and denote by $\hat p(\lambda,s)\coloneqq \int_0^\infty \exp{-\lambda x}\hat p(x;s)\df x$ the double Laplace transform of the joint density function $p(x,y)$, we obtain\footnote{Note that $\hat p(0,s)=\hat p_Y(s) $ by Tonelli's theorem.} 
\begin{align}\label{eq:doubleCostF}
\hat{\mathfrak h}(\lambda,s)
& =\frac 1 \lambda\frac{ \hat p(0,s)-\hat p(\lambda,s)}{1-\hat p (\lambda, s)}
= \frac 1 \lambda\left[ 1- \frac{1-\hat p_Y(s)}{1-\hat p(\lambda,s)} \right].
\end{align}
\newline

In what follows, we leverage the explicit expression of the double transform to derive precise large deviation estimates using  Theorem \ref{thm:doubleLaplace}.
More explicitly, we will apply Theorem~\ref{thm:doubleLaplace} to 
\begin{align}\label{eq:doubleCost}
\hat {\mathfrak G} (\lambda,s)&\coloneqq \frac{\lambda^{-1}-\hat{\mathfrak h}(\lambda,s)}s
= \frac 1 {s\lambda} \frac{1-\hat p_Y(s)}
{1-\hat p(\lambda,s)}.
\end{align}
We will then rely on Lemma~\ref{lem:MDT} in order to transfer the result from $G_t(x)$ to $\bar R_t(x)=\int_x^\infty h_t(c) \df c$.
The control on the asymptotic behaviour of $\hat{\mathfrak G} (\lambda,s)$
will allow us to obtain the large deviation probabilities of the family $(C_{\tau(t)})_{t\geq 0}$. 
\newline

Below, we instantiate the cost variable for three distinct models and leverage Theorem~\ref{thm:doubleLaplace} to derive their corresponding large deviation principles. In the first instance, the cost $C$ is chosen to be independent of the control process $S$. This configuration represents a random sum with independent increments and provides a fundamental benchmark; it shows that the double transform derived from a renewal-type equation recovers, in a direct way, the corresponding result in~\cite[\S~3.2]{CP2024} as a transparent special case.
 
Conversely, the second example considers a cost $C$ that is inherently coupled with the control process $S$. This case specifically addresses the asymptotic behavior of the leapover (overshoot) above a threshold $t$. Our findings contribute to the characterization of precise large deviations for this class of processes, which have remained, to our knowledge, largely unexplored in this specific regime.
Finally, we consider a third model which combines the previous two settings to obtain large deviation estimates for a process inspired by the work of Gut and Janson~\cite{GJ1983}, providing new results that extend the current understanding of this class of models.

\subsubsection{Random sums}\label{sec:RS}
Take a control-cost process~\eqref{eq:CCprocess} in which the increments $X_k$ and $Y_k$ are independent. In this case, the joint probability density function factorizes as $p(x,y)=p_X(x) p_Y(y)$, and~\eqref{eq:doubleCost} provides
\begin{align}\label{eq:doubleRS}
\hat {\mathfrak G} (\lambda,s)
& = \frac 1 {s\lambda} \frac{1-\hat p_Y(s)}{1-\hat p_X(\lambda) \hat p_Y(s)}.
\end{align}
\begin{example} \label{ex:RS}
Assume that the increments $Y_k$ of the process under large deviation analysis belong to the domain of attraction of a $\beta$-stable law as in Example~\ref{ex:IID}, namely that $\hat p_Y(s)=1+a(s)$ with $a(s)=-\Gamma(1-\beta)\ell(1/s)s^\beta+o(\ell(1/s)s^\beta)$ as $s\to 0^+$ and $\beta\in (0,1)$, and let $X_k$ be such that\footnote{ Throughout this paper, given two functions $f(x)$ and $g(x)$ we write $f(x)\sim g(x)$ as $x\to 0$ if and only if $\lim_{x\to 0} f(x)/g(x) = 1$.} $1-\hat p_X(\lambda)\sim c_X \lambda ^\gamma$ as $\lambda\to 0$ for some $c_X>0$, $\gamma\in(0,1]$.
For $|\frac{\hat p_X(\lambda)}{1-\hat p_X(\lambda)} a(s)|<1$ we can write
\begin{align}\label{eq:RSbeta}
\hat {\mathfrak G} (\lambda,s)
&= -\frac 1 {s\lambda} \sum_{k=1}^\infty \frac 1 {\hat p_X(\lambda)}\left( \frac{\hat p_X(\lambda)}{1-\hat p_X(\lambda)} a(s)\right)^k.
\end{align}
Note that for the degenerate case of deterministic sums we recover an expression closely related to~\eqref{eq:doubleIID} of Example~\ref{ex:IID}; see  Appendix~\ref{app:RS} for a detailed  comparison.
Moreover, choosing a scaling $s_\lambda$ such that $\ell(1/s_\lambda)s_\lambda^\beta/\lambda^\gamma\to 0$ as $\lambda \to 0$, we establish 
\[
\lim_{\lambda \to 0}\sup_{0<s\leq \theta s_\lambda}\left| \frac{c_X \lambda^{\gamma+1}\hat {\mathfrak G} (\lambda,s)}{ \Gamma(1-\beta)\ell(1/s)s^{-(1-\beta)}}-1\right|=0,\qquad\forall \,\theta>0,
\]
and we conclude that, for every $(x_t)_{t\geq 0}$ such that $t^\gamma \ell(x_t) x_t^{-\beta} \to 0$ as $ t \to \infty$,
\[
\lim_{t \to \infty}\sup_{x\geq x_t}\left| \frac{c_X\Gamma(\gamma)\bar R_t(x)}{t^\gamma \ell(x) x^{-\beta}} -1 \right|=0.
\]
\end{example}

It should be noted that the independence of the increments enabled the explicit factorization shown in~\eqref{eq:RSbeta}, which in turn allows for term-by-term inversion with respect to $\lambda$.  Such a decoupling  mirrors the time-domain analysis performed on the one-parameter family $(C_{\tau(t)})_{t\geq 0}$ in~\cite[\S~3.2.2]{CP2024}. This situation is similar to that of Example ~\ref{ex:IID}  -- further mathematical details regarding this comparison are deferred to Appendix~\ref{app:RS}.

Having revisited this baseline scenario, we now turn to illustrate the full power of Theorem~\ref{thm:doubleLaplace} through genuinely correlated models that cannot be traced back to the single-transform framework in~\cite{CP2024}, as detailed in Corollaries~\ref{cor:Leap} and~\ref{cor:Gut}.

\subsubsection{Leapover}\label{sec:leap}
Let us consider the cost process with increments $Y_k= X_k$ and assume that $X_k$ are in the domain of attraction~\cite[Thm.~2.6.1 and Thm.~2.6.5]{Ibragimov} of a $\beta$-stable law with $0<\beta<1$. The corresponding control process $(S_n)_{n\in\N_0}$, that is a walker covering long distances in a single instantaneous jump, is called a generalized \emph{L\'evy flight}~\cite{Mandelbrot1982,SK1986}. The random variable $L_t\coloneqq C_{\tau(t)}-t$, also known in the literature as \emph{leapover}~\cite{Eliazar2004, Koren2007b, Koren2007}, represents the magnitude of the overshoot beyond the barrier at the first-passage event. Alternatively, $L_t$ can be interpreted as the first waiting time, after the onset of the observation, in a renewal process with a broad waiting time distribution~\cite{GL2001,Barkai2003,Margolin2006}.

\begin{corollary}\label{cor:Leap}
Let $(S_n,C_n)_{n\in\N_0}$ be as in Section~\ref{sec:leapGut}, with $Y_1=X_1$ and $\P(X_1>x)\sim \ell(x) x^{-\beta}/\Gamma(1-\beta)$ as $x\to\infty$ for some $\beta\in(0,1)$, $\ell$ positive slowly varying function at infinity. Then, for $(x_t)_{t\geq 0}$ such that $x_t\to\infty $ and $t/ x_t\to 0$ as $t\to\infty$, we have
\begin{equation*}
\lim_{t\to\infty} \sup_{x\geq  x_t}\left|\Gamma(1-\beta)\Gamma(1+\beta) \frac{\ell(t)\P(L_t>x)}{t^{\beta}x^{-\beta}\ell(x)}- 1 \right| =0.
\end{equation*}
\end{corollary}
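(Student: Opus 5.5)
The plan is to identify the double transform of the leapover explicitly, check hypothesis~\eqref{eq:hypLap} of Theorem~\ref{thm:doubleLaplace} with $\gamma=1+\beta$ and $\alpha=1-\beta$, and then pass from $G_t$ to the tail with Lemma~\ref{lem:MDT}.

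\textbf{Computing the double transform.} Since $Y_k=X_k$, the joint density is $p(x,y)=p_X(x)\delta(y-x)$. Hence $\hat p(\lambda,s)=\hat p_X(\lambda+s)$ and $\hat p_Y=\hat p_X$. Write $A(u)\coloneqq 1-\hat p_X(u)$. The tail assumption $\P(X_1>x)\sim \ell(x)x^{-\beta}/\Gamma(1-\beta)$ gives, by the classical Tauberian theorem, $A(u)\sim \ell(1/u)u^\beta$ as $u\to0$.

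Because $L_t=C_{\tau(t)}-t$, its Laplace transform is $e^{st}\hat h_t(s)$. Therefore the double transform of the density of $L_t$ is $\hat{\mathfrak h}(\lambda-s,s)$ for $\lambda>s$. Substituting into~\eqref{eq:doubleCostF} with $\hat p(\lambda-s,s)=\hat p_X(\lambda)$ gives
\[
\hat{\mathfrak h}_L(\lambda,s)=\frac{A(\lambda)-A(s)}{(\lambda-s)A(\lambda)} .
\]
With $G_t(x)=\int_0^x\P(L_t>y)\df y$ as in Section~\ref{sec:Examples}, this yields
\[
\hat{\mathfrak G}(\lambda,s)=\frac{\lambda^{-1}-\hat{\mathfrak h}_L(\lambda,s)}{s}=\frac{A(s)}{s(\lambda-s)A(\lambda)}-\frac{1}{\lambda(\lambda-s)} .
\]

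\textbf{Checking~\eqref{eq:hypLap}.} Take $s_\lambda\coloneqq 1/x_{1/\lambda}$. Then $t/x_t\to0$ translates into $s_\lambda/\lambda\to0$, so uniformly for $0<s\le\theta s_\lambda$ we have $\lambda-s\sim\lambda$.

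The first term equals $A(s)/(s\lambda A(\lambda))\,(1+o(1))$, uniformly. It is asymptotic to
\[
\frac{\ell(1/s)\,s^{-(1-\beta)}}{\ell(1/\lambda)\,\lambda^{1+\beta}},
\]
where uniformity comes from the uniform convergence theorem for the regularly varying $A$.

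The ratio of the second term to the first is of order
\[
\Bigl(\frac{s}{\lambda}\Bigr)^{1-\beta}\frac{\ell(1/\lambda)}{\ell(1/s)} .
\]
This tends to $0$ uniformly in $s\le\theta s_\lambda$ by Potter bounds, since $1-\beta>0$ and $s/\lambda\le\theta s_\lambda/\lambda\to0$. So~\eqref{eq:hypLap} holds with $\gamma=1+\beta>1$, $\alpha=1-\beta$ and $L(t)=1/\ell(t)$.

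\textbf{Applying the theorem and differentiating.} Theorem~\ref{thm:doubleLaplace} then gives, uniformly in $x\ge x_t$,
\[
G_t(x)\sim \frac{t^\beta\,\ell(x)\,x^{1-\beta}}{\ell(t)\,\Gamma(1+\beta)\,\Gamma(2-\beta)} .
\]
Next apply Lemma~\ref{lem:MDT}; its integrand $\P(L_t>y)$ is monotone in $y$. As in Example~\ref{ex:IID}, we may replace $s_\lambda$ by $s_\lambda/(1+\eta)$. Differentiation brings down a factor $1-\beta$, and $\Gamma(2-\beta)/(1-\beta)=\Gamma(1-\beta)$. This yields exactly the claimed constant $\Gamma(1-\beta)\Gamma(1+\beta)$.

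\textbf{Main obstacle.} The delicate point is the monotonicity hypothesis of Theorem~\ref{thm:doubleLaplace}, namely that $\hat G_t(s)=\int_0^\infty e^{-sx}\P(L_t>x)\df x$ be (eventually) non-decreasing in $t$. Unlike $C_{\tau(t)}$, the leapover need not be stochastically monotone in $t$.

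My first attempt would be to work instead with $Z_t=C_{\tau(t)}$, whose tail is non-decreasing in $t$. I would apply the theorem to the unshifted transform~\eqref{eq:doubleCost}, then transfer the result via $\P(L_t>x)=\P(C_{\tau(t)}>t+x)$. Since $t/x\to0$ for $x\ge x_t$, we have $\ell(x+t)(x+t)^{-\beta}\sim\ell(x)x^{-\beta}$ uniformly. The cost is a slightly messier expansion of $(1-\hat p_X(s))/(1-\hat p_X(\lambda+s))$, which behaves the same way to leading order since $s\ll\lambda$.

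If that route fails, I would fall back on the relaxed condition in the footnote to Theorem~\ref{thm:doubleLaplace}. This would require checking that $\hat G_t(s)$ is eventually non-decreasing in $t$, uniformly for small $s$, by means of the renewal equation~\eqref{eq:recCost}.
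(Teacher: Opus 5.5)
Your proposal is correct once you commit to what you call your first attempt. That route differs from the paper's in where the shift by $t$ is performed.

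\textbf{What the paper does.} The paper verifies \eqref{eq:hypLap} only for the unshifted transform $\hat{\mathfrak G}(\lambda,s)=\frac{1}{s\lambda}\frac{A(s)}{A(\lambda+s)}$ of $C_{\tau(t)}$. The only input there is $\ell(1/(\lambda+s))\sim\ell(1/\lambda)$ uniformly for $s\le\theta s_\lambda$, by the uniform convergence theorem. So this expansion is simpler than your leapover one, not messier. The paper then restates the estimate for $\hat{\mathfrak h}(\lambda,s)$ and passes to the leapover in the transform domain: $\E\,\mathrm{e}^{-sL_t}=\mathrm{e}^{st}\hat h_t(s)$ has double transform $\hat{\mathfrak h}(\lambda-s,s)$. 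Only then does it invoke Theorem~\ref{thm:doubleLaplace} and Lemma~\ref{lem:MDT}. Your main computation is an explicit version of that step; the extra term $-1/(\lambda(\lambda-s))$ and its negligibility by Potter bounds are left implicit in the paper.

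\textbf{Why the shift should be done in $x$ instead.} Applying the theorem to $(L_t)$ requires $\hat G_t(s)$ to be monotone in $t$. The paper does not check this for the shifted variable, and it can fail. If $X_1\ge 1$ a.s., then $L_t=X_1-t$ for $t\in(0,1)$, so $\hat G_t(s)$ is strictly decreasing there. Eventual monotonicity uniformly in $s$ is not evident either. Your first attempt removes the issue at no cost:
\begin{itemize}
\item $C_{\tau(t)}=S_{\tau(t)}$ is pathwise non-decreasing in $t$, so Theorem~\ref{thm:doubleLaplace} applies verbatim with $\gamma=1+\beta$, $\alpha=1-\beta$, $L=1/\ell$.
\item Lemma~\ref{lem:MDT}, with $x_t$ replaced by $x_t/(1+\eta)$, gives the tail of $C_{\tau(t)}$ uniformly on $[x_t,\infty)$.
\item Finally use $\P(L_t>x)=\P(C_{\tau(t)}>t+x)$ together with $\sup_{x\ge x_t}\bigl|\ell(x+t)(x+t)^{-\beta}/(\ell(x)x^{-\beta})-1\bigr|\to0$, which holds since $1\le (x+t)/x\le 1+t/x_t$. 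This gives the claim with the stated constant.
\end{itemize}
Present that as the proof. The fallback through the footnote's relaxed condition is unnecessary.
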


\begin{proof}
With the choice $Y_k= X_k$, we have to consider $p(x,y)=p_X(x)\delta(y-x)$ and~\eqref{eq:replaced} gives $\hat p(x;s)= p_X(x)\exp{-sx}$, $\hat p(\lambda, s) = \hat p_X(\lambda+s)$, $\hat p_Y(s)=\hat p_X(s)$.
Hence, from~\eqref{eq:doubleCost} we obtain 
\begin{align*}
\hat {\mathfrak G} (\lambda,s)
& =\frac 1 {s\lambda} \frac{1-\hat p_X(s)}{1-\hat p_X(s+\lambda)}.
\end{align*}
By assumption (see~\cite[Thm.~2.6.5]{Ibragimov}), $\hat p_X(s)=1+a(s)$ with $a(s)=-\ell(1/s)s^\beta+o(\ell(1/s)s^\beta)$ as $s\to 0$.
Choose $(s_\lambda)_{\lambda> 0}$ such that $s_\lambda/\lambda\to 0$ as $\lambda \to 0$.  By means of Corollary~\ref{cor:eqPointUnif},  it is sufficient  to control, for  all fixed  $\theta>0$,  the behaviour of $\hat{\mathfrak G}(\lambda, \tilde s_\lambda)$ 
for any $(\tilde s_\lambda)_{\lambda>0}$ such that  $\tilde s_\lambda\leq  \theta s_\lambda$ for all $\lambda$. 
Recalling the properties of the little-o notation, we can write
\begin{align*}
\hat{\mathfrak G}(\lambda, \tilde s_\lambda)
&=\frac 1 {\tilde s_\lambda\lambda}
\frac{\tilde s_\lambda^\beta\ell(1/\tilde s_\lambda)\left(1+\frac{o(\tilde s_\lambda^\beta\ell(1/\tilde s_\lambda))}{\tilde s_\lambda^\beta\ell(1/\tilde s_\lambda)}\right)} 
{\lambda^\beta\left(1+\frac{\tilde s_\lambda}\lambda\right)^\beta \ell\left(\frac 1{\lambda +\tilde s_\lambda}\right) \left(1+\frac{o((\lambda +\tilde s_\lambda)^\beta \ell( 1/ (\lambda+\tilde s_\lambda)) )}{(\lambda +\tilde s_\lambda)^\beta \ell( 1/ (\lambda+\tilde s_\lambda))}\right)}\\
&=\frac {\tilde s_\lambda^{\beta-1}}{\lambda^{\beta+1}}\frac{\ell(1/\tilde s_\lambda)}{\ell(1/\lambda)}+o\left(\frac {\tilde s_\lambda^{\beta-1}}{\lambda^{\beta+1}} \frac{\ell(1/\tilde s_\lambda)}{\ell(1/\lambda)}\right),\qquad \text{as}\quad \lambda \to 0,
\end{align*}
given that by the Uniform Convergence Theorem~\cite[Thm.~1.2.1]{bingham} we have\footnote{Looking at $\ell\left( (1+\frac{\tilde s_\lambda}\lambda)^{-1}/\lambda\right)$ we have a moving multiplier but, for $\lambda$ small enough, $\left(1+\frac{\tilde s_\lambda}\lambda\right)^{-1}\in[1/2,3/2]$. The claim then follows since
\[
\lim_{x\to\infty}\sup_{c\in[1/2,3/2]}\left|\frac{\ell(cx)}{\ell(x)}-1\right|=0.
\]}
\[
\lim_{\lambda \to 0} \frac{\ell\left(\frac 1{\lambda +\tilde s_\lambda}\right)}{\ell(1/ \lambda)}=1.
\]

Recalling that $\hat{\mathfrak G}(\lambda, s) = \frac{\lambda^{-1}-\hat{\mathfrak h}(\lambda,s)}s$ we  get
\begin{equation}\label{eq:Cleap}
\lim_{\lambda\to0}\sup_{0<s\leq \theta s_\lambda}\left|\ell(1/\lambda) \frac{\lambda^{-1}-\hat{\mathfrak h}(\lambda,s)}{\lambda^{-\beta-1}s^\beta\ell(1/s)} - 1 \right|=0\quad\text{for all}\quad \theta>0.
\end{equation}
Now, noting that by definition $\mathcal L_{L_t}(s)\coloneqq \E(\exp{-sL_t})=\exp{st}\hat h_t(s)$ and as a consequence
\[
\mathcal L_t\{\mathcal L_{L_t}(s)\}(\lambda)\coloneqq \int_0^\infty \exp{-(\lambda-s) t}\hat h_t(s) \df t=\hat {\mathfrak h}(\lambda-s, s),
\]
by~\eqref{eq:Cleap}, Theorem~\ref{thm:doubleLaplace}\,---\,where the choice of the family $(s_\lambda)_{\lambda>0}$ is defined only up to multiplication by a positive constant by assumption~\eqref{eq:hypLap}\,---\,and Lemma~\ref{lem:MDT}, we can conclude that 
\begin{equation*}
\lim_{t\to\infty} \sup_{x\geq  x_t}\left|\Gamma(1-\beta)\Gamma(1+\beta) \frac{\ell(t)\P(L_t>x)}{t^{\beta}x^{-\beta}\ell(x)}- 1 \right| =0,
\end{equation*}
with $x_t\coloneqq 1/ s_{\frac 1 t}$.
\end{proof}

Note that the large deviation condition  $t/ x_t\to 0$ as $t\to\infty$ is sharp, as it lies precisely at the boundary of the support constraint $h_t(c)=0$ for $c\leq t$ inherent to the model's construction; see Appendix~\ref{app:BoundedRegion} for a detailed discussion.

\medskip
The proof reveals that, unlike the decoupled structures seen in Examples~\ref{ex:IID} and~\ref{ex:RS}, the joint transform $\hat{\mathfrak h}(\lambda, s)$ does not generally allow for an explicit form as a series of functions factorized in $s$ and $\lambda$\,---\,except for specific choices of the distribution of the increments; see Appendix~\ref{app:ML} for an example. This observation is the fundamental reason for requiring direct asymptotic control over the double transform.

\subsubsection{Stopped two-dimensional random walks}\label{sec:OSGut}
Inspired by~\cite{Kaijser1971,GJ1983} and~\cite[Ch.~4]{Gut2009}, let us assume that the cost process~\eqref{eq:CCprocess} can be expressed as the sum of two components: one correlated with the control process, as in Section~\ref{sec:leap}, and another independent of it, as in Example~\ref{ex:RS}.  More precisely, $Y_k\coloneqq X_k+W_k$, where $(W_k)_{k\in\N}\in\R_+^\N$ is independent of $(X_k)_{k\in\N}\in\R_+^\N$.

For comparison with the leapover case, the variable $C_{\tau(t)}$ may now be interpreted as a quantitative measure of longitudinal dispersion by means of the distribution of the total time elapsed  until the control variable exceeds the threshold $t$. Indeed, the two components of the cost process can be viewed as corresponding to a mobile phase (with constant longitudinal velocity) and a stationary phase of the walker, respectively.

By way of illustration, and without loss of generality, suppose that the mobile phase is governed by a finite mean distribution (e.g., Poisson exponential law), and that the waiting times in the immobile zone follow a power law distribution~\cite{SBMB2003}. Specifically, for some $\mu>0$, $\ell$ positive slowly varying function at infinity and $\beta\in(0,1)$
\begin{equation}\label{eq:GutDistr}
\hat p_X(s)=1-\mu s+o(s),\qquad \hat p_W(s)=1-\ell(1/s)s^\beta+o(s^\beta).
\end{equation}
\begin{corollary}\label{cor:Gut}
Let $(S_n,C_n)_{n\in\N_0}$ be as in Section~\ref{sec:leapGut}, with $Y_1=X_1+W_1$ defined in~\eqref{eq:GutDistr}. Then, for $( x_t)_{t\geq 0}$ such that $ x_t\to\infty $ and $t/ x_t^\beta\to 0$ as $t\to\infty$, we have
\begin{equation*}
\lim_{t\to\infty} \sup_{x\geq x_t}\left|\mu\Gamma(1-\beta)\ell(t) \frac{\P(C_{\tau(t)}>x)}{t\ell(x)x^{-\beta}}- 1 \right| =0.
\end{equation*}
\end{corollary}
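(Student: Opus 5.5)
The plan mirrors the proof of Corollary~\ref{cor:Leap}: derive the double transform from~\eqref{eq:doubleCost}, control it uniformly on $0<s\leq\theta s_\lambda$ through Corollary~\ref{cor:eqPointUnif}, and then apply Theorem~\ref{thm:doubleLaplace} and Lemma~\ref{lem:MDT}. One point needs flagging at the outset: the expansion below naturally yields the normalisation $t\,\ell(x)x^{-\beta}$, and I cannot yet see where the factor $\ell(t)$ in the statement would come from.

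\emph{Step 1: the transform.} Since $Y_k=X_k+W_k$ with $W_k$ independent of $X_k$, the joint density is $p(x,y)=p_X(x)\,p_W(y-x)$. Hence
\[
\hat p(x;s)=p_X(x)\exp{-sx}\hat p_W(s),\qquad \hat p(\lambda,s)=\hat p_X(\lambda+s)\,\hat p_W(s),\qquad \hat p_Y(s)=\hat p_X(s)\hat p_W(s).
\]
Substituting into~\eqref{eq:doubleCost} gives
\[
\hat{\mathfrak G}(\lambda,s)=\frac{1}{s\lambda}\,\frac{1-\hat p_X(s)\hat p_W(s)}{1-\hat p_X(\lambda+s)\hat p_W(s)}.
\]

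\emph{Step 2: the scaling $s_\lambda$.} I choose $s_\lambda$ so that $x_t=1/s_{1/t}$ matches the hypothesis $t/x_t^\beta\to0$, that is, $s_\lambda^\beta/\lambda\to0$. By Corollary~\ref{cor:eqPointUnif} it suffices to study $\hat{\mathfrak G}(\lambda,\tilde s_\lambda)$ for arbitrary $\tilde s_\lambda\leq\theta s_\lambda$. The two factors behave as follows.

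\begin{itemize}
\item \textbf{Numerator.} By~\eqref{eq:GutDistr}, $1-\hat p_X(\tilde s)\hat p_W(\tilde s)=\ell(1/\tilde s)\tilde s^\beta(1+o(1))$, because $\mu\tilde s=o(\tilde s^\beta\ell(1/\tilde s))$ when $\beta<1$.
\item \textbf{Denominator.} It equals $\mu(\lambda+\tilde s)+\ell(1/\tilde s)\tilde s^\beta+o(\cdot)$. The condition $s_\lambda^\beta/\lambda\to0$ gives $\tilde s=o(\lambda)$. I would first try to show that the term $\ell(1/\tilde s)\tilde s^\beta$ is also $o(\lambda)$ uniformly in $\tilde s\le\theta s_\lambda$, using Potter bounds and, if necessary, shrinking $s_\lambda$ by a slowly varying correction, which the theorem allows since $x_t$ only needs to be a lower threshold.
\end{itemize}

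The outcome should be
\[
\hat{\mathfrak G}(\lambda,s)\sim\frac{\ell(1/s)s^{\beta-1}}{\mu\lambda^{2}},
\]
which is hypothesis~\eqref{eq:hypLap} with $\gamma=2$, $\alpha=1-\beta$ and $L\equiv1/\mu$.

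\emph{Step 3: conclusion.} Theorem~\ref{thm:doubleLaplace} then yields
\[
G_t(x)\sim\frac{t\,\ell(x)x^{1-\beta}}{\mu\,\Gamma(2-\beta)}
\quad\text{uniformly on } x\geq x_t.
\]
One application of Lemma~\ref{lem:MDT}, valid because $\bar R_t$ is monotone and $s_\lambda$ may be rescaled by $(1+\eta)^{-1}$, transfers this to
\[
\bar R_t(x)=\P(C_{\tau(t)}>x)\sim\frac{t\,\ell(x)x^{-\beta}}{\mu\,\Gamma(1-\beta)}.
\]

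\emph{Main obstacle.} The delicate step is the uniform control of the denominator, together with the matching of the slowly varying normalisation. As written, the statement carries an extra factor $\ell(t)$, and such a factor would have to arise from $L(1/\lambda)$ in~\eqref{eq:hypLap}. In the expansion above, however, the $\lambda$-dependence enters only through $\mu\lambda$, which carries no slowly varying factor. So I would revisit the denominator to look for a term producing $\ell(1/\lambda)$, for instance from how the boundary condition $t/x_t^\beta\to0$ interacts with $\ell(1/\tilde s)\tilde s^\beta$. If no such term exists, the normalisation obtained is $t\,\ell(x)x^{-\beta}$, and the $\ell(t)$ in the statement should be checked against the leapover case, where $\ell(t)$ does appear because there the denominator is $\ell(1/(\lambda+s))(\lambda+s)^\beta$. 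As things stand, this plan establishes the asymptotic without the $\ell(t)$ factor and does not yet prove the statement as worded.
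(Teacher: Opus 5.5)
Your route is the paper's route: the transform from \eqref{eq:doubleCost}, the scaling $s_\lambda^\beta/\lambda\to0$, Corollary~\ref{cor:eqPointUnif}, Theorem~\ref{thm:doubleLaplace}, and one use of Lemma~\ref{lem:MDT}. Your suspicion about $\ell(t)$ is justified: there is no hidden term to find. Once $\tilde s_\lambda=o(\lambda)$ and $\ell(1/\tilde s_\lambda)\tilde s_\lambda^\beta=o(\lambda)$, the denominator $1-\hat p_W(s)\hat p_X(s+\lambda)$ equals $\mu\lambda(1+o(1))$ and carries no slowly varying factor in $\lambda$. Hence $L\equiv1/\mu$, and the conclusion is $\P(C_{\tau(t)}>x)\sim t\,\ell(x)x^{-\beta}/(\mu\Gamma(1-\beta))$, exactly as you derived.

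The paper's proof reaches $\ell(t)$ only by writing an expansion with an extra factor $1/\ell(1/\lambda)$ and an undefined constant $c$. This looks carried over from Corollary~\ref{cor:Leap}, where the denominator really is $\ell(1/(\lambda+s))(\lambda+s)^\beta$; it does not follow from \eqref{eq:GutDistr}. Taking $\ell\equiv2$ shows the statement as worded is off by a factor $2$. Your formula agrees with the heuristic $\tau(t)\approx t/\mu$, $\P(\sum_{k\le n}W_k>x)\sim n\P(W_1>x)$, $\P(W_1>x)\sim\ell(x)x^{-\beta}/\Gamma(1-\beta)$.

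The step of your plan that does fail is the proposed repair of the denominator by shrinking $s_\lambda$. This is not harmless: Theorem~\ref{thm:doubleLaplace} then gives the conclusion only on $x\ge 1/s'_{1/t}$, a strictly smaller range than $x\ge x_t$. You would no longer cover all thresholds allowed by the hypothesis.

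What the argument actually needs is $\sup_{0<s\le\theta s_\lambda}\ell(1/s)s^\beta/\lambda\to0$. Since $s\mapsto\ell(1/s)s^\beta$ is regularly varying of index $\beta>0$ at $0$, this is equivalent to $\ell(1/s_\lambda)s_\lambda^\beta/\lambda\to0$, i.e.\ $t\,\ell(x_t)x_t^{-\beta}\to0$. This is the condition of Example~\ref{ex:RS} with $\gamma=1$. It follows from $t/x_t^\beta\to0$ when $\ell$ is bounded, but not in general, and it cannot be dropped. For $\ell(x)=\log x$ and $x_t=(t\log\log t)^{1/\beta}$ one has $t/x_t^\beta\to0$ but $t\,\ell(x_t)x_t^{-\beta}\to\infty$. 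Then the asymptotic $t\,\ell(x)x^{-\beta}/(\mu\Gamma(1-\beta))$ at $x=x_t$ diverges and cannot be equivalent to a probability. The paper's proof, which assumes only $s_\lambda^\beta/\lambda\to0$, has the same omission.

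Replace the shrinking trick with the hypothesis $t\,\ell(x_t)x_t^{-\beta}\to0$, and read the error in \eqref{eq:GutDistr} as $o(\ell(1/s)s^\beta)$, as your numerator estimate implicitly does. With these changes your argument is complete and proves the corrected corollary, without the factor $\ell(t)$.
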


\begin{proof}
We write $p(x,y)=p_X(x)p_W(y-x)$, where $p_W(w)$ denotes the common density distribution of the i.i.d.\ random variables $(W_k)_{k\in\N}$.
Hence~\eqref{eq:replaced} provides $\hat p(x;s)=p_X(x)\int_0^\infty p_W(y-x)\exp{-sy}\df y$, $\hat p_Y(s)=\hat p_X(s) \hat p_W(s)$ and $\hat p(\lambda, s)=\hat p_X(s+\lambda) p_W(s)$, and the double Laplace trasform of the stopped cost process $C_{\tau(t)}$ gives
\[
\hat {\mathfrak G} (\lambda,s) =\frac 1 {s\lambda}\frac{1-\hat p_W(s)\hat p_X(s)}{1-\hat p_W(s)\hat p_X(s+\lambda)}.
\]
Proceeding as in the proof of Corollary~\ref{cor:Leap}, given $(s_\lambda)_{\lambda> 0}$ such that $s_\lambda^\beta/\lambda\to 0$ as $\lambda \to 0$, note that for every $\theta>0$ and any $(\tilde s_\lambda)_{\lambda>0}$ such that $\tilde s_\lambda\leq \theta s_\lambda$ for all $\lambda$ we get
\[
\hat {\mathfrak G} (\lambda,\tilde s_\lambda) = \frac c\mu \frac {\tilde s_\lambda^{\beta-1}}{\lambda^2}\frac{\ell(1/\tilde s_\lambda)}{\ell(1/\lambda)}+o\left(\frac {\tilde s_\lambda^{\beta-1}}{\lambda^2}\frac{\ell(1/\tilde s_\lambda)}{\ell(1/\lambda)}\right) \quad\text{as}\quad \lambda \to 0.
\]
As a consequence of Corollary~\ref{cor:eqPointUnif}, Theorem~\ref{thm:doubleLaplace} and Lemma~\ref{lem:MDT}, defining $ x_t\coloneqq 1/ s_{\frac 1 t}$, this leads us to the conclusion
\begin{equation*}
\lim_{t\to\infty} \sup_{x\geq x_t}\left|\mu\Gamma(1-\beta)\ell(t) \frac{\P(C_{\tau(t)}>x)}{t\ell(x)x^{-\beta}}- 1 \right| =0.
\end{equation*}
\end{proof}
As noted in Appendix~\ref{app:ML} for the leapover case,  direct control of the double transform is crucial in the presence of correlations. Exception occurs only for specific distributions (e.g  for exponential and Mittag--Leffler) where the error term is explicit and the coupled contributions in $s$ and $\lambda$ can be inverted exactly with respect to $\lambda$, as shown in Example~\ref{ex:ML}. However, without full knowledge of the distribution, as in~\eqref{eq:GutDistr}, explicit control of the error term in $t$ is generally unavailable.

\subsection{Path-observables of random walks conditioned to stay positive }\label{sec:CSP}
Consider the control-cost process~\eqref{eq:CCprocess} with $(X_k,Y_k)_{k\in\N}\in (\R\times[0,\infty))^{\N}$.
Our aim is to characterize the large deviation probabilities of the cost variable when the control variable is conditioned to stay positive. To this aim, let 
$(\cT_k,H_k)_{k\in\N_0}$ denote the bivariate renewal process of \emph{strict} ladder times and heights, defined by
\[
\cT_0=0,\quad \cT_k\coloneqq \inf\{j>\cT_{k-1}\,:\,S_j>S_{\cT_{k-1}}\}\quad\text{for}\quad k\in\N,
\]
with the convention $\inf\{\emptyset\}=+\infty$, and with $H_k\coloneqq S_{\cT_k}$. Moreover, let $\cT^-\coloneqq \inf\{n\geq1\,:\,S_n\leq 0\}$ denote the first \emph{weak} decreasing ladder time.
We define  $\{Z_n > x\}:= \{C_n>x, \cT^- >n\}$ and write $g_C$ for the Green's function (i.e., renewal mass function) in the bivariate renewal process  --- see also~\cite[Eq.~(7)]{AD1999}\cite{D2012,DJ2012} --- with the presence of a cost, defined as
\begin{equation}
g_C(n,\df x, \df y)\coloneqq \sum_{k=0}^n \P(\cT_k=n,H_k\in \df x,C_n\in \df y),\quad n\geq 0,\quad x,y\geq 0.
\end{equation}
As a fundamental observation, $g_C(n,\df x, \df y)$ corresponds to the distribution of the control-cost process $(S,C)$ while S is conditioned to stay positive
\[
g_C(n,\df x,\df y) =\P(S_n\in\df x , Z_n\in\df y),
\]
as an immediate consequence of the dual relations~\cite[\S~XII.2]{Feller2}; see also~\cite[Lem.~2.1 and Cor.~2.2]{AD2001}.\footnote{For fixed $n$, introduce new variables $X_1^*=X_n,\dots,X_n^*=X_1$, and denote the corresponding partial sums by $S_k^*=S_n-S_{n-k}$ where $k=0,\dots,n$. Given that
\[
g(n,\df x)=\P(S_n>S_j\text{ for } j=0,\dots, n-1 \text{ and } S_n\in\df x),
\]
it is not difficult to realize that if $S_n>S_j$ for $j=0,\dots, n-1$ and $S_n\in\df x$, then $S^*_1>0,\dots,S^*_n>0$ and $S^*_n(=S_n)\in\df x$. This proves the validity of~\cite[Eq.~(19)]{D2012}. It is immediate that considering jointly $S_n\in\df x$ and $C_n\in\df y$ does not invalidate the proof (in fact, $C_n^*=C_n$). }
Let
\[
\hat g_C(n;u,s)\coloneqq \int_0^\infty \df x \,\exp{-ux}\int_0^\infty \df y\, \exp{-sy} g_C(n,\df x,\df y),\quad u,s\geq 0,
\]
denote the double Laplace transform of $g_C(n,\df x,\df y)$ with respect to the control-cost process. If $n=0$ we trivially get $\hat g_C(0;u,s)=1$. So let us now assume that $n\geq 1$.
From a slight generalization of~\cite[Eq.~(13)]{AD1999}, we know that for $1\leq k\leq n$
\[
\P(\cT_k=n,H_k=x, C_n=y)=\frac k n \P({\mathrm N}_x=k,S_n=x, C_n=y), \quad x,y \geq 0, 
\]
where ${\mathrm N}_x\coloneqq \inf\{j\,:\, H_j\geq x\}$.
Hence for $n\geq 1$ we get 
\begin{align*}
\hat g_C(n;u,s)
& = \frac 1 n  \int_0^\infty \df x\,\exp{-ux}\int_0^\infty \df y \,\exp{-sy} \sum_{j=1}^n j \, \P({\mathrm N}_x=j,S_n=x,C_n=y)\\
& = \frac 1 n \E({\mathrm N}_{S_n}\exp{-uS_n}\exp{-sC_n};S_n>0).
\end{align*}
Introducing the generating function with respect to $n$, and by dominated convergence\footnote{Since by definition $0\leq\mathrm{N}_{S_n}\leq n$ and, denoting $\mathfrak F_n(\alpha)\coloneqq \E(\alpha^{\mathrm{N}_{S_n}}\exp{-uS_n}\exp{-sC_n};S_n>0)$, for any fixed $n$ and any $\alpha \in (0,1]$
\[
\left|\frac{\mathfrak F_n(\alpha)-\mathfrak F_n(1)}{\alpha-1}\right|= \E\left(\frac{\alpha^{{\mathrm N}_{S_n}}-1}{\alpha-1}\exp{-uS_n}\exp{-uC_n};S_n>0\right)\leq \E({\mathrm N}_{S_n};S_n>0),
\]
the difference quotients are dominated by $n$ and the Dominated Convergence Theorem justifies $\mathfrak F'_n(1)=\E({\mathrm N}_{S_n}\exp{-uS_n}\exp{-sC_n};S_n>0)$.}, we obtain that for any $z\in[0,1)$
\begin{align*}
\hat g_C(z,u,s)
& \coloneqq \sum_{n=0}^\infty z^n \hat g_C(n;u,s)\\
& = 1+ \sum_{n=1}^\infty \frac {z^n} n  \left[\frac{\partial}{\partial \alpha}\E(\alpha^{{\mathrm N}_{S_n}}\exp{-uS_n}\exp{-sC_n};S_n>0) \right]_{\alpha=1}.
\end{align*}
Exchanging the sum and the derivative\footnote{A sufficient condition to justify this (by dominated convergence) is the absolute summability of the series of derivatives, guaranteed by the fact that $|z|\E(\exp{-uX_1})<1$.}, we can use the generalization of~\cite[Eq.~(14)]{AD1999}\footnote{Observe that for $\alpha=1$ we recover the generalized Spitzer-Baxter identity~\cite[Thm.~2.1]{Bianchi2022}.} to write
\begin{align}
\hat g_C(z,u,s) & = 1+ \frac{\partial}{\partial \alpha} \left[ - \log (1-\alpha\E[z^{\cT_1}\exp{-uH_1}\exp{-sC_{\cT_1}}])\right]_{\alpha=1}.\label{eq:consAlili}
\end{align}
Note that $\hat R_n(s)=\hat g_C(n,0,s)$, hence
\(
\hat R(z,s)\coloneqq\sum_{n=0}^\infty z^n \hat R_n(s)=\hat g_C(z,0,s).
\)
To obtain precise large deviation estimates, we will invoke Corollary~\ref{cor:Lap} applied to
\[
\hat{\mathcal G}(z,s) \coloneqq \frac {\hat R(z,0) - \hat R(z,s)} s=\frac {\hat g_C(z,0,0) - \hat g_C(z,0,s)} s,
\]
after specifying the cost $C$.

\subsubsection{Total path-length of a random walk conditioned to stay positive}
Let us consider the cost process defined by $Y_k\coloneqq |X_k|$, with $X_k$'s  in the domain of attraction of a $\beta$-stable law with $0<\beta<1$. 
The random variable $C_n$ represents the total distance travelled by the generalized L\'evy flight $(S_n)_{n\in\N_0}$ after $n$ steps. Hereinfater, we will assume that $(X_n)_{k\in\N}$ has an absolutely continuous distribution for simplicity of exposition.
\begin{corollary}\label{cor:Cond}
Let $(S_n,C_n)_{n\in\N_0}$ be as in~\eqref{eq:CCprocess}, with $Y_1=|X_1|$ and $\P(X_1<-x)=\P(X_1>x)\sim \ell(x) x^{-\beta}/\Gamma(1-\beta)$ as $x\to\infty$ for some $\ell(x)$ positive slowly varying function, $\beta\in(0,1)$. Then, for $( x_n)_{n\in\N}$ such that $ x_n\to\infty $ and $n/ x_n^\beta\to 0$ as $n\to\infty$, we have
\begin{equation}\label{eq:TotalLengthConditioned}
\lim_{n\to\infty}\sup_{x\geq x_n}\left| \frac{\P(C_n>x,\cT^->n)}{2n\P(\cT^{-}>n)\P(X_1>x)}-1\right|=0.
\end{equation}
\end{corollary}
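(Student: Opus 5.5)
The plan is to compute the generating function $\hat R(z,s)=\hat g_C(z,0,s)$ in closed form from \eqref{eq:consAlili}, extract its joint asymptotics as $z\to1$, $s\to0$, and then apply Corollary~\ref{cor:Lap} followed by Lemma~\ref{lem:MDT}.

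\textbf{Closed form.} Put $A(z,s)\coloneqq\E[z^{\cT_1}\exp{-sC_{\cT_1}}]$ in \eqref{eq:consAlili} with $u=0$. Differentiating $-\log(1-\alpha A)$ at $\alpha=1$ gives
\[
\hat R(z,s)=1+\frac{A}{1-A}=\frac{1}{1-A(z,s)}.
\]
The generalized Spitzer--Baxter identity \cite[Thm.~2.1]{Bianchi2022} then yields
\[
\hat R(z,s)=\exp\Big(\sum_{n\ge1}\frac{z^n}{n}\E[\exp{-sC_n};S_n>0]\Big).
\]
Since $X_1$ is symmetric with an absolutely continuous law, $\P(S_n>0)=1/2$. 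Hence $\hat R(z,0)=(1-z)^{-1/2}=\sum_n z^n\P(\cT^->n)$, and therefore $\P(\cT^->n)\sim n^{-1/2}/\Gamma(1/2)$. This gives
\[
\hat{\mathcal G}(z,s)=\frac{(1-z)^{-1/2}}{s}\bigl(1-\exp{-D(z,s)}\bigr),\qquad D(z,s)\coloneqq\sum_{n\ge1}\frac{z^n}{n}\E\bigl[1-\exp{-sC_n};S_n>0\bigr].
\]

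\textbf{Asymptotics of $D$.} Set $b(s)\coloneqq 1-\E\exp{-s|X_1|}\sim 2\ell(1/s)s^\beta$; the factor $2$ comes from the two symmetric tails. Choose $s_z$ with $b(s_z)/(1-z)\to0$, which corresponds to $n/x_n^\beta\to0$. The goal is
\[
D(z,s)=\frac{b(s)}{2(1-z)}\,(1+o(1))\quad\text{uniformly in }0<s\le\theta s_z .
\]
Granting this, $1-\exp{-D}\sim D$ because $D\to0$, so
\[
\hat{\mathcal G}(z,s)\sim(1-z)^{-3/2}\ell(1/s)s^{-(1-\beta)}.
\]
By Corollary~\ref{cor:eqPointUnif} it suffices to check this along sequences $\tilde s_z\le\theta s_z$. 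Corollary~\ref{cor:Lap} with $\gamma=3/2$ and $\alpha=1-\beta$ then gives
\[
G_n(x)\sim\frac{n^{1/2}\ell(x)x^{1-\beta}}{\Gamma(3/2)\Gamma(2-\beta)}.
\]
Lemma~\ref{lem:MDT} converts this into
\[
\P(C_n>x,\cT^->n)\sim\frac{n^{1/2}\ell(x)x^{-\beta}}{\Gamma(3/2)\Gamma(1-\beta)}.
\]
Using $\Gamma(3/2)=\Gamma(1/2)/2$, this equals $2n\P(\cT^->n)\P(X_1>x)$, which is \eqref{eq:TotalLengthConditioned}.

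\textbf{Estimate of $D$ (main obstacle).} This is where I expect the real work. I would first use
\[
0\le\sum_{k\le n}\bigl(1-\exp{-s|X_k|}\bigr)-\bigl(1-\exp{-sC_n}\bigr)\le\sum_{j<k}\bigl(1-\exp{-s|X_j|}\bigr)\bigl(1-\exp{-s|X_k|}\bigr).
\]
The error term has expectation $O(n^2b(s)^2)$. Summed against $z^n/n$, it contributes $O(b^2/(1-z)^2)=o(b/(1-z))$. By exchangeability, the main term becomes $n\,\E[(1-\exp{-s|X_1|})\mathbf 1\{S_n>0\}]$.

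Next I would split on $|X_1|\ge\varepsilon/s$ versus $|X_1|<\varepsilon/s$. The small-jump part contributes $o(b(s))$ as $\varepsilon\to0$, by regular variation with $\beta<1$. On the big-jump part, $n b(s)\to0$ forces $1/s\gg a_n$, the stable scale of $S_{n-1}$. So, conditionally, $S_n>0$ essentially iff $X_1>0$, which happens with probability $1/2$ by symmetry. This gives $\tfrac12 b(s)(1+o(1))$ uniformly over $n\lesssim K/(1-z)$. Terms with $n\gg1/(1-z)$ are handled by the crude bound $\E[1-\exp{-sC_n}]\le nb(s)$ together with geometric decay of $z^n$.

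\textbf{Monotonicity.} I must also verify the monotonicity hypothesis of Corollary~\ref{cor:Lap}. Now $\hat G_n(s)=\int\exp{-sx}\P(C_n>x,\cT^->n)\,\mathrm{d}x$ is not obviously monotone in $n$: $C_n$ grows while the survival event shrinks. I would try the relaxed version in the footnote to Theorem~\ref{thm:doubleLaplace}, i.e.\ eventual monotonicity uniformly in small $s$. Failing that, I would replace $\hat G_n$ by $\hat G_n/\P(\cT^->n)$ multiplied by a monotone regularly varying factor, and check the comparison argument directly.
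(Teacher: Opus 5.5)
Your argument is correct in its main line, but it reaches the transform asymptotics by a longer route than the paper. The paper uses that $(S_n,C_n)$ and $(-S_n,C_n)$ have the same law. Since $\P(S_n=0)=0$, this gives $\E[\mathrm{e}^{-sC_n};S_n>0]=\tfrac12\hat p_Y(s)^n$ exactly. The Spitzer--Baxter exponent you wrote down then sums to $-\tfrac12\log(1-z\hat p_Y(s))$, i.e.\ $\hat R(z,s)=(1-z\hat p_Y(s))^{-1/2}$, which the paper quotes from \cite[Cor.~3.4]{Bianchi2022}, and $\hat{\mathcal G}(z,s)$ is expanded explicitly in powers of $\tfrac{z}{1-z}a_Y(s)$.

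You apply the sign flip only at $s=0$, and then estimate $D$ via Bonferroni and a big-jump split. That estimate is sound as sketched, but here it is redundant. The same flip gives $\E[(1-\mathrm{e}^{-s|X_1|})\mathbf 1\{S_n>0\}]=\tfrac12 b(s)$ with no error, and in fact $D(z,s)=\tfrac12\log\bigl(1+\tfrac{z\,b(s)}{1-z}\bigr)$ in closed form. The trade-off is this. Your estimate of $D$ does not use the sign-flip identity with the cost weight, so it is the kind of argument that might extend to asymmetric walks, where $\P(S_n>0)\to\rho\neq\tfrac12$ and $\hat R(z,0)$ is only regularly varying. The paper's closed form makes the asymptotics immediate, and it yields an exact factorization that also settles your monotonicity question.

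You are right that monotonicity in $n$ is not automatic, and the paper passes over it. Its general remark that $\hat G_t(s)$ is non-decreasing because $\bar R_t(x)$ is does not apply here: for instance $\bar R_n(0)=\P(\cT^->n)$ decreases. Extracting coefficients from the closed form gives $\E[\mathrm{e}^{-sC_n};\cT^->n]=q_n\hat p_Y(s)^n$, with $q_n\coloneqq\P(\cT^->n)$. Hence $\hat G_n(s)=q_n(1-\phi^n)/s$ with $\phi\coloneqq\hat p_Y(s)$. Since $q_{n+1}/q_n=\tfrac{2n+1}{2n+2}$, we get $\hat G_{n+1}(s)\ge\hat G_n(s)$ if and only if $(2n+1)\phi^n(1-\phi)\ge 1-\phi^n$.

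For fixed $s$ this inequality fails for all large $n$. So your first fallback, eventual monotonicity uniform in $s$, is false. However, the inequality holds whenever $\phi^n\ge\tfrac12$ (use $1-\phi^n\le n(1-\phi)$), in particular whenever $n\,b(s)\le\tfrac12$. That is enough, because the proof of Theorem~\ref{thm:doubleLaplace} uses monotonicity only in passing from \eqref{eq:TaubL_gen} to \eqref{eq:MonotoneT}. This involves only $n\in[(1-\epsilon)t,(1+\epsilon)t]$ and $s\le\theta s_{1/t}$, where $n\,b(s)\to0$; the first step needs only $\hat G_n\ge 0$.

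Alternatively, your second fallback becomes exact. The factorization says $\P(C_n>x,\cT^->n)=\P(\cT^->n)\P(C_n>x)$. Dividing by $\P(\cT^->n)$ leaves the monotone family $(1-\hat p_Y(s)^n)/s$ of Example~\ref{ex:IID} for the increments $|X_k|$. Then \eqref{eq:TotalLengthConditioned} follows from \eqref{eq:LDforIID} together with $\P(|X_1|>x)=2\P(X_1>x)$.
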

\begin{proof}
Observe that $(S,C)$ is symmetric, in the sense that for all $x,y\in \R$ and $n\in\N$
\[
\P(S_n\in\df x, C_n\in\df y)=\P(-S_n\in\df x, C_n\in\df y).
\]
Setting $u=0$ in~\eqref{eq:consAlili}, by~\cite[Cor.~3.4]{Bianchi2022} we get the closed-form expression
\[
\hat g_C(z,0,s)=\frac 1{\sqrt{1-z\hat p_Y(s)}}.
\]

Let us compute
\begin{eqnarray*}
\hat{\mathcal G}(z,s) &=& \sum_{n=0}^\infty z^n\int_0^\infty\exp{-sx}\P(C_n>x,\cT^->n) \df x,\\
&=&\frac1s\left[   \frac1{\sqrt{1-z}}-\frac1{\sqrt{1-zp_Y(s)}} \right].
\end{eqnarray*}
For $\hat p_Y(s)=1+a_Y(s)$, choosing  $\tfrac z {1-z}|a_Y(s)|<1$, we can expand
\begin{align*}
\hat{\mathcal G}(z,s) 
&= -\frac 1 {s\sqrt{1-z}} \left[\frac z {1-z} \frac{a_Y(s)}2+\sum_{k=2}^\infty \frac{(2k)!}{(2^k k!)^2} \left(\frac z {1-z}a_Y(s)\right)^k \right].
\end{align*}
We can therefore apply Corollary~\ref{cor:Lap}, with $(s_z)_{z \in [0,1)}$ such that $s_z\to 0$ and $ a_Y(s_z)/(1-z)\to 0$ as $z\to1$ and, by Lemma~\ref{lem:MDT}, we obtain that

\[
\lim_{n\to\infty}\sup_{x\geq x_n}\left| \frac{\sqrt{\pi}\P(C_n>x,\cT^->n)}{\sqrt{n}\P(Y_1>x)}-1\right|= 0,
\]
where $( x_n)_{n\in \N}$ is such that $ x_n\to \infty$ and $n/ \ell(x_n)x_n^\beta$ as $n\to\infty$.
Noting that $P(Y_1>x)=2P(X_1>x)$ and that\footnote{This is a well-known result in the literature: by the Sparre Andersen identity~\cite[\S{XII.7}]{Feller2} --- for absolutely continuous distribution the contribution from $S_n=0$ is irrelevant --- $\hat g_{\cT^-}(z)\coloneqq \sum_{n=1}^\infty \P(\cT^-=n)z^n=1-\sqrt{1-z}$. Hence, 
\[
\sum_{n=0}^\infty \P(\cT^->n)z^n=\frac{1-\hat g_{\cT^-}(z)}{1-z}=\frac 1 {\sqrt{1-z}}=\sum_{j=0}^\infty \binom{\tfrac 1 2 +n-1}{n} z^n ,\qquad z\in(0,1),
\]
which implies $\P(\cT^->n)=  (2n)!/(2^n n!)^2\sim 1/\sqrt{\pi n}$ as $n \to \infty$.} $\P(\cT^{-}>n)\sim 1/\sqrt{\pi n}$ yields the result~\eqref{eq:TotalLengthConditioned}.
\end{proof}
While the factorized form of $\hat{\mathcal G}(z,s) $ suggests a possible reduction to a single-parameter Tauberian theorem, such a structure is not directly accessible starting from the family in the time domain, in contrast to Examples~\ref{ex:IID} and~\ref{ex:RS}. 
Here, the relevant quantities are attainable only through recurrence relations that yield the joint transform; the double-transform framework is therefore not merely optional, but necessary. We employ it throughout the analysis because it provides a more natural and direct route to the asymptotic results than a post-hoc single-variable inversion.

\section{Proofs}\label{sec:proofs}
\subsection{Proof of Theorem~\ref{thm:doubleLaplace}}
\begin{proof}
We introduce the following notation
\[
\hat {\mathfrak G} (\lambda,s)= \int_0^\infty \exp{-\lambda t}\hat G_t(s)\df t
\eqqcolon \int_0^\infty \exp{-\lambda t}\df \mathfrak G(t;s).
\]
Observe that $\lim_{t\to0} \mathfrak G(t;s)=: \fG(0;s)=0$ if there is no mass in $t=0$.
Firstly, we adapt the strategy used in \cite{CP2024} to establish a generalized uniform Tauberian result in $\lambda$ based on the assumption~\eqref{eq:hypLap}. Specifically, we aim to show that for any $c>0$
\begin{equation}\label{eq:TaubL_gen}
\lim_{t \to \infty} \sup_{0<s\leq \theta s_{\frac 1 t}}\left| \frac{\Gamma(\gamma+1)[\fG(ct;s)-\fG(0;s)]}{L(t)(ct)^{\gamma}\ell(1/s)s^{-\alpha}}-1\right|=0,\qquad\forall\,\theta>0.
\end{equation}
Then, we want to show that~\eqref{eq:TaubL_gen} implies
\begin{equation}\label{eq:MonotoneT}
\lim_{t \to \infty} \sup_{0<s\leq \theta s_{\frac 1 t}}\left| \frac{\Gamma(\gamma)\hat G_t(s)}{L(t)t^{\gamma-1}\ell(1/s)s^{-\alpha}}-1\right|=0,\qquad\forall\,\theta>0,
\end{equation}
in order to obtain a uniform control over $\hat G_t(s)$. Finally, we can conclude on $G_t(x)$ by means of~\cite[Thm.~1]{CP2024}. 

For the first statement, fix $c>0$ and define the function $j_c:[0,1]\to [0,\mathrm e^c]$ as
\[
j_c(x)\coloneqq \begin{cases}
   0 & \quad \textrm{for}  \quad x\in[0,\exp{-c}), \\
   1/x & \quad \textrm{for}  \quad x\in[\exp{-c},1] ,
\end{cases}
\]
and recall that, given $\lambda>0$, we have the identity
\begin{equation}\label{eq:Gwithj}
\fG(c/\lambda;s) -\fG(0;s)=\int_0^{\infty} \exp{-\lambda t}j_c(\exp{-\lambda t})\df\fG(t;s).
\end{equation}
It is known (see~\cite[Thm.~109]{Hardy} and~\cite[\S V.4, Lem.~4.1]{Widder}) that for any fixed $c>0$ and for all $\epsilon>0$, there exist polynomials $P_{\epsilon, c}(x)$ and $p_{\epsilon, c}(x)$ such that
\begin{equation}\label{eq:Weier1_c}
p_{\epsilon, c}(x)\le j_c(x)\le P_{\epsilon, c}(x)\, , \quad \quad \forall x\in[0,1],
\end{equation}
and
\begin{equation}\label{eq:Weier2_c}
\int_0^{\infty} \exp{-t} t^{\gamma-1} \left[P_{\epsilon, c}(\exp{-t})-p_{\epsilon, c}(\exp{-t}) \right]\df t \le \epsilon \Gamma(\gamma).
\end{equation}
From \eqref{eq:Weier1_c}, using the definition of $j_c(x)$ and the fact that $\fG(t;s)$ is non-decreasing in $t$, we have
\begin{subequations}
\begin{equation}\label{eq:ineq1_c}
\int_0^{\infty}  \exp{-\lambda t} p_{\epsilon, c}(\exp{-\lambda t})\df \fG(t;s) \le \fG(c/\lambda;s)-\fG(0;s)\le \int_0^{\infty}  \exp{-\lambda t} P_{\epsilon, c}(\exp{-\lambda t})\df \fG(t;s),
\end{equation}
\begin{equation}\label{eq:ineq2_c}
\gamma \int_0^{\infty}  \exp{-t}\,  t^{\gamma-1}\, p_{\epsilon,c}(\exp{-t}) \df t  \, \le c^\gamma \le \, \gamma\int_0^{\infty}  \exp{-t}\,t^{\gamma-1}\, P_{\epsilon,c}(\exp{-t})\df t,
\end{equation}
\end{subequations}
where we started from the identity $\gamma \int_0^\infty \exp{-t} t^{\gamma-1} j_c(\exp{-t})\df t = c^\gamma$ in the second line.
Combining~\eqref{eq:ineq1_c} with~\eqref{eq:ineq2_c} we get
\[
\left| \frac{\Gamma(\gamma+1)[\fG(c/\lambda;s) -\fG(0;s)]}{L(1/\lambda)\lambda^{-\gamma}\ell(1/s)s^{-\alpha}}-c^\gamma \right|\le \max\{A_{\lambda,\epsilon, c},B_{\lambda,\epsilon, c} \}\,,
\]
where
\begin{align*}
A_{\lambda,\epsilon, c}&\coloneqq\left| \frac{\Gamma(\gamma+1)\int_0^{\infty}  \exp{-\lambda t} P_{\epsilon, c}(\exp{-\lambda t})\df \fG(t;s)}{L(1/\lambda)\lambda^{-\gamma}\ell(1/s)s^{-\alpha}} 	- \gamma \int_0^{\infty}  \exp{-t}\,  t^{\gamma-1}\, p_{\epsilon, c}(\exp{-t}) \df t  \right|,\\
B_{\lambda,\epsilon, c}&\coloneqq 
\left| \frac{ \Gamma(\gamma+1)\int_0^{\infty}  \exp{-\lambda t} p_{\epsilon, c}(\exp{-\lambda t})\df \fG(t;s)}{L(1/\lambda)\lambda^{-\gamma}\ell(1/s)s^{-\alpha}}-  \gamma \int_0^{\infty}  \exp{-t}\,  t^{\gamma-1}\, P_{\epsilon, c}(\exp{-t}) \df t  \right|.
\end{align*}
Let $D_c(\epsilon),d_c(\epsilon)$ denote the maximum degrees and $(a_k(\epsilon))_{k=0}^{D_c(\epsilon)},(b_k(\epsilon))_{k=0}^{d_c(\epsilon)}$ the coefficients of the polynomials $P_{\epsilon, c}(x), p_{\epsilon, c}(x)$ respectively. 
We can therefore rewrite
\begin{align}
\int_0^{\infty}  \exp{-\lambda t} P_{\epsilon, c}(\exp{-\lambda t})\df \fG(t;s)&=\sum_{k=0}^{D_c(\epsilon)}a_k(\epsilon)\hat\fG((k+1)\lambda,s),\nonumber\\
\gamma \int_0^{\infty}\exp{-x}x^{\gamma-1}P_{\epsilon, c}(\exp{-x})\df x&=\Gamma(\gamma+1)\sum_{k=0}^{D_c(\epsilon)}a_k(\epsilon)(k+1)^{-\gamma},\label{eq:failAlpha0_c}
\end{align}
and similarly for $p_{\epsilon, c}(x)$. Jointly with \eqref{eq:Weier2_c}, these provide the bounds
\begin{align*}
A_{\lambda,\epsilon, c} & \leq  \Gamma(\gamma+1)
\sum_{k=0}^{D_c(\epsilon)}|a_k(\epsilon)|  (k+1)^{-\gamma} \left|\frac{\hat\fG((k+1)\lambda,s)}{L(1/\lambda)[(k+1)\lambda]^{-\gamma}\ell(1/s)s^{-\alpha}}-1\right| +\epsilon\Gamma(\gamma+1),\\
B_{\lambda,\epsilon, c} & \leq \Gamma(\gamma+1) 
\sum_{k=0}^{d_c(\epsilon)}|b_k(\epsilon)|  (k+1)^{-\gamma} \left|\frac{\hat\fG((k+1)\lambda,s)}{ L(1/\lambda)[(k+1)\lambda]^{-\gamma}\ell(1/s)s^{-\alpha}}-1\right| +\epsilon\Gamma(\gamma+1).
\end{align*}
Let $Dd_c(\epsilon)\coloneqq\max\{D_c(\epsilon),d_c(\epsilon)\}$ and observe that for every $k\in\{0,\dots,Dd_c(\epsilon)$ we can write
\begin{multline*}
\left|\frac{\hat\fG((k+1)\lambda,s)}{L(1/\lambda)[(k+1)\lambda]^{-\gamma}\ell(1/s)s^{-\alpha}} -1 \right|\\
\leq \left| \frac{L(1/[(k+1)\lambda])}{L(1/\lambda)}\right|\left|\frac{\hat\fG((k+1)\lambda,s)}{L(1/[(k+1)\lambda])[(k+1)\lambda]^{-\gamma}\ell(1/s)s^{-\alpha}} -1 \right| + \left| \frac{L(1/[(k+1)\lambda])}{L(1/\lambda)}-1\right|.
\end{multline*}
Given $\delta>0$, fix $\epsilon =\epsilon(\delta)< \delta / (3\Gamma(\gamma+1))$, and consequently the polynomials $P_{\epsilon, c},\,p_{\epsilon, c}$. Define
$K(\epsilon)\coloneqq \Gamma(\gamma+1) \max_k\{|a_k(\epsilon)|,|b_k(\epsilon)|\}$. 
Using the fact that $L$ is slowly varying and by hypothesis~\eqref{eq:hypLap}, for every $\theta>0$ we can find $\bar \lambda(\delta,\theta, c)\coloneqq \min_k\bar \lambda(\delta,\theta, c, k)$ such that for all $\lambda\leq\bar \lambda(\delta,\theta,c)$ and for every $ 0\leq k \leq Dd_c(\epsilon)$
\begin{align*}
 \left| \frac{L(1/[(k+1)\lambda])}{L(1/\lambda)}-1\right| &\leq  \frac\delta{3K(\epsilon)(Dd_c(\epsilon)+1)},\\
\sup_{0<s\leq \theta s_\lambda}\left| \frac{\hat\fG((k+1)\lambda, s)}{ L(1/[(k+1)\lambda]) [(k+1)\lambda]^{-\gamma}\ell(1/s)s^{-\alpha}}-1\right| &\leq  \frac\delta{3K(\epsilon)(Dd_c(\epsilon)+1)}\frac 1{1+\delta/[3K(\epsilon)(Dd_c(\epsilon)+1)]}.
\end{align*}
This ensures that, for all $\lambda\leq \bar\lambda(\delta,\theta, c)$:
\[
\sup_{0<s\leq \theta s_\lambda}\left| \frac{\Gamma(\gamma+1)[\fG(c/\lambda;s)-\fG(0;s)]}{L(1/\lambda)\lambda^{-\gamma}\ell(1/s)s^{-\alpha}}-c^\gamma\right|
\leq \sup_{0<s\leq \theta s_\lambda}\max\{A_{\lambda,\epsilon(\delta), c},B_{\lambda,\epsilon(\delta), c}\} \le \delta.
\]
Setting $t \coloneqq 1/\lambda$, dividing the expression inside the absolute value by the constant $c^\gamma$, and letting $\delta \to 0$ yield~\eqref{eq:TaubL_gen}.

In order to prove~\eqref{eq:MonotoneT}, let us assume that $\hat G_t(s)$ is eventually non-decreasing in $t$ uniformly for $s\in(0,\varepsilon)$, and fix $\epsilon>0$. By eventual monotonicity, for $t$ sufficiently large we have
\[
\fG(t;s)-\fG(t-\epsilon t;s)=\int_{t-\epsilon t}^t \hat G_y(s)\df y\leq \hat G_t(s)\epsilon t\leq\int_t^{t+\epsilon t} \hat G_y(s)\df y= \fG(t+\epsilon t;s)-\fG(t;s).
\]
Hence we have
\[
\left| \frac{\Gamma(\gamma)\hat G_t(s)}{L(t)t^{\gamma-1}\ell(1/s)s^{-\alpha}}-1\right|\leq \max\{A,B\}
\]
where
\[
 A \coloneqq \left| \frac{\Gamma(\gamma+1)[\fG(t+\epsilon t;s)-\fG(t;s)]}{\gamma \epsilon L(t)t^{\gamma}\ell(1/s)s^{-\alpha}}-1\right|, \quad
B\coloneqq \left| \frac{\Gamma(\gamma+1)[\fG(t;s)-\fG(t-\epsilon t;s)]}{\gamma \epsilon L(t) t^{\gamma}\ell(1/s)s^{-\alpha}}-1\right|.
\]
Thanks to~\eqref{eq:TaubL_gen}, we can now bound their suprema over the same domain $0 < s \le \theta s_{\frac 1 t}$. For the upper bound $A$, we obtain
\begin{align*}
\sup_{0<s\leq \theta  s_{\frac 1 t}} A
& \leq \frac 1 {\gamma\epsilon}\sup_{0<s\leq \theta s_{\frac 1 t}}\left| \frac{\Gamma(\gamma+1)[\fG(t;s) -\fG(0;s)]}{L(t)t^{\gamma}\ell(1/s)s^{-\alpha}}-1\right| \\
& \qquad +\frac{(1+\epsilon)^\gamma}{\gamma\epsilon} \sup_{0<s\leq \theta  s_{\frac 1 t}}\left| \frac{\Gamma(\gamma+1)[\fG(t(1+\epsilon);s)-\fG(0;s)]}{L(t)(t(1+\epsilon))^{\gamma}\ell(1/s)s^{-\alpha}}-1\right| \\
&\qquad +\frac 1 {\gamma\epsilon} \left|(1+\epsilon)^\gamma-1-\gamma \epsilon \right|.
\end{align*}
Taking $t$ large enough so that both suprema on the right-hand side are $O(\epsilon^2)$ by~\eqref{eq:TaubL_gen} (applied with $c=1$ and $c=1+\epsilon$ respectively), we obtain $\sup_{0<s\leq \theta  s_{\frac 1 t}} A \le O(\epsilon)$. 
By a similar argument, setting $c=1$ and  $c=1-\epsilon$, for $t$ large enough we have
\begin{align*}
\sup_{0<s\leq \theta  s_{\frac 1 t}} B 
& \leq \frac 1 {\gamma\epsilon}\sup_{0<s\leq \theta s_{\frac 1 t}}\left| \frac{\Gamma(\gamma+1)[\fG(t;s) -\fG(0;s)]}{L(t)t^{\gamma}\ell(1/s)s^{-\alpha}}-1\right| \\
& \qquad +\frac{(1-\epsilon)^\gamma}{\gamma\epsilon} \sup_{0<s\leq \theta  s_{\frac 1 t}}\left| \frac{\Gamma(\gamma+1)[\fG(t(1-\epsilon);s) -\fG(0;s)]}{L(t)(t(1-\epsilon))^{\gamma}\ell(1/s)s^{-\alpha}}-1\right|\\
&\qquad +\frac 1 {\gamma\epsilon} \left|1-\gamma \epsilon-(1-\epsilon)^\gamma \right|\\
&  = O(\epsilon).
\end{align*}
Letting $\epsilon \to 0$ concludes the proof of~\eqref{eq:MonotoneT}.
\end{proof}

\begin{proof}[Proof of Corollary~\ref{cor:Lap}]
Refer to the proof of~\cite[Thm.~XIII.5.5]{Feller2}. By defining
\[
\hat G_t(s)\coloneqq \hat G_n(s)\qquad\text{for}\quad n\leq t<n+1,
\]
the Laplace transform of $\hat G_t(s)$ is given by
\begin{equation}\label{eq:cor1}
\hat {\mathfrak G}(\lambda,s)
= \sum_{n=0}^\infty\hat G_n(s)\int_n^{n+1}\exp{-\lambda t}\df t
 = \frac{1-\exp{-\lambda}}\lambda\hat {\mathcal G}(\exp{-\lambda},s),
\end{equation}
and the conclusion immediately follows by virtue of Theorem~\ref{thm:doubleLaplace}.
\end{proof}

\appendix 
\section{Technical results}
In the main text, we make extensive use of the following elementary equivalence.
\begin{lemma}\label{lem:eqPointUnif}
Let $(f_\lambda(s))_{\lambda> 0}$ be a family of functions where $f_\lambda:(0,\infty)\to[0,\infty)$ for each $\lambda> 0$.  Let $(s_\lambda)_{\lambda> 0}$ be such that $s_\lambda \to 0$ as $\lambda\to 0$. The following statements are equivalent:
\begin{enumerate}[label=(\roman*)]
\item 
$ \displaystyle
\lim_{\lambda\to 0} \sup_{0<s\leq s_\lambda} f_\lambda(s)=0.
$
\item For any $(\tilde s_\lambda)_{\lambda> 0}$ such that $\tilde s_\lambda\leq s_\lambda$ for all $\lambda> 0$, we have
$ \displaystyle
\lim_{\lambda\to 0}f_\lambda(\tilde s_\lambda)=0.
$
\end{enumerate}
\end{lemma}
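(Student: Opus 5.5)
The plan is to prove the two implications separately; both are elementary, and the only care needed is in building a witnessing sequence for the harder direction.

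\emph{(i) $\Rightarrow$ (ii).} Fix any $(\tilde s_\lambda)_{\lambda>0}$ with $0<\tilde s_\lambda\le s_\lambda$. Then for every $\lambda>0$
\[
0\le f_\lambda(\tilde s_\lambda)\le \sup_{0<s\le s_\lambda} f_\lambda(s),
\]
and the right-hand side tends to $0$ by (i). Hence $f_\lambda(\tilde s_\lambda)\to 0$ as $\lambda\to0$.

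\emph{(ii) $\Rightarrow$ (i).} I would argue by contraposition. Suppose (i) fails. Then there exist $\eta>0$ and a sequence $\lambda_n\to 0$ with $\lambda_n>0$ such that
\[
\sup_{0<s\le s_{\lambda_n}} f_{\lambda_n}(s)>2\eta \quad\text{for all } n,
\]
where the value $+\infty$ is allowed. By the definition of supremum, for each $n$ we can pick $\sigma_n\in(0,s_{\lambda_n}]$ with $f_{\lambda_n}(\sigma_n)>\eta$. Passing to a subsequence, we may assume the $\lambda_n$ are distinct and strictly decreasing. We then define a selection $(\tilde s_\lambda)_{\lambda>0}$ by setting $\tilde s_{\lambda_n}\coloneqq\sigma_n$ and $\tilde s_\lambda\coloneqq s_\lambda$ for every other $\lambda$. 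This satisfies $0<\tilde s_\lambda\le s_\lambda$ for all $\lambda$, so it is admissible in (ii). However, $f_{\lambda_n}(\tilde s_{\lambda_n})>\eta$ along $\lambda_n\to0$, so $f_\lambda(\tilde s_\lambda)$ does not converge to $0$. This contradicts (ii).

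The only point requiring attention is that the counterexample selection is defined for every $\lambda>0$ and remains in the admissible range $(0,s_\lambda]$. This is why the sequence is arranged to have distinct terms and the remaining values are filled with $s_\lambda$ itself. No regularity of $f_\lambda$ is needed, and neither is the hypothesis $s_\lambda\to0$ (it serves only the later applications). The same argument works with $s_\lambda$ replaced by $\theta s_\lambda$ for fixed $\theta>0$, which gives the form used throughout the paper (cf.\ Corollary~\ref{cor:eqPointUnif}).
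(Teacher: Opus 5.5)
Your proposal is correct and follows the paper's proof essentially step for step: you bound $f_\lambda(\tilde s_\lambda)$ by the supremum for (i)$\Rightarrow$(ii), and for the converse you argue by contradiction with a sequence $\lambda_n\to0$ and witnesses $\sigma_n\in(0,s_{\lambda_n}]$, setting $\tilde s_\lambda=s_\lambda$ elsewhere. Your extra step of passing to distinct, strictly decreasing $\lambda_n$, so that the selection $\tilde s_{\lambda_n}=\sigma_n$ is well defined, is a small point of rigor that the paper leaves implicit.
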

\begin{proof}
We prove the two implications.
\paragraph{$(i)\implies (ii)$.} We assume that $(i)$ holds. Let $(\tilde s_\lambda)_{\lambda> 0}$ be such that $\tilde s_\lambda\leq s_\lambda$ for all $\lambda> 0$. Then, for each $\lambda$, we have
\[
0\leq f_\lambda(\tilde s_\lambda)\leq \sup_{0<s\leq s_\lambda} f_\lambda(s),\quad \text{which implies}\quad  0\leq \limsup_{\lambda \to 0} f_\lambda(\tilde s_\lambda)\leq \lim_{\lambda\to 0}\sup_{0<s\leq s_\lambda} f_\lambda(s)=0.
\]
\paragraph{$(ii)\implies (i)$.}  We assume that $(ii)$ holds. By contradiction, we suppose that $(i)$ does not hold, namely that there exist $\epsilon>0$ and a subsequence $(\lambda_n)_{n\in\N}$, with $\lambda_n\to 0$ as $n\to\infty$, such that $\sup_{0<s\leq s_{\lambda_n}} f_{\lambda_n}(s)\geq 2\epsilon$ for all $n\in\N$.
Hence, there exists a sequence $(\bar s_n)_{n\in\N}$ such that $\bar s_n\leq s_{\lambda_n}$ and $f_{\lambda_n}(\bar s_n)\geq \epsilon$. Define
\[
\tilde s_\lambda\coloneqq \begin{cases}
\bar s_n\qquad&\text{if}\quad \lambda=\lambda_n,\\
s_\lambda\qquad&\text{otherwise}.
\end{cases}
\]
Then $\tilde s_\lambda\leq s_\lambda$ for all $\lambda$, but $f_{\lambda_n}(\tilde s_{\lambda_n}) \geq \epsilon$, so $\limsup_{\lambda\to0} f_\lambda(\tilde s_\lambda)\geq \epsilon$, contradicting the assumption~$(ii)$.
\end{proof}
\begin{APPcorollary}\label{cor:eqPointUnif}
Let $(f_\lambda(s))_{\lambda> 0}$ be a family of functions where $f_\lambda:(0,\infty)\to[0,\infty)$ for each $\lambda>0$.  Let $(s_\lambda)_{\lambda>0}$ be such that $s_\lambda \to 0$ as $\lambda \to 0$. The following statements are equivalent:
\begin{enumerate}[label=(\roman*)]
\item For all \( \theta > 0 \), $\lim_{\lambda\to0} \sup_{0<s\leq\theta s_\lambda} f_\lambda(s) = 0$.
\item For every $\theta>0$, and for any \( (\tilde s_\lambda)_{\lambda> 0} \) such that \( \tilde s_\lambda \leq \theta s_\lambda\) for all $\lambda>0$, we have
$\lim_{\lambda \to 0} f_\lambda(\tilde s_\lambda) = 0$.
\end{enumerate}
\end{APPcorollary}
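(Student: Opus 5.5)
The plan is to reduce the statement to Lemma~\ref{lem:eqPointUnif} by absorbing the dilation parameter $\theta$ into the reference scale. Fix $\theta>0$ and set $s^{(\theta)}_\lambda\coloneqq \theta s_\lambda$. Since $s_\lambda\to0$ as $\lambda\to0$ and $\theta$ is a fixed positive constant, we also have $s^{(\theta)}_\lambda\to 0$. Hence the family $(f_\lambda)_{\lambda>0}$ together with the scale $(s^{(\theta)}_\lambda)_{\lambda>0}$ satisfies the hypotheses of Lemma~\ref{lem:eqPointUnif}.

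For this fixed $\theta$, Lemma~\ref{lem:eqPointUnif} gives an equivalence between two statements. The first is $\lim_{\lambda\to0}\sup_{0<s\le \theta s_\lambda} f_\lambda(s)=0$. The second is $\lim_{\lambda\to0} f_\lambda(\tilde s_\lambda)=0$ for every $(\tilde s_\lambda)_{\lambda>0}$ with $\tilde s_\lambda\le \theta s_\lambda$ for all $\lambda$. These are exactly the $\theta$-instances of (i) and (ii) in the corollary.

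Each of (i) and (ii) in the corollary is the conjunction over all $\theta>0$ of its $\theta$-instance, and the instances are equivalent $\theta$ by $\theta$. So (i) holds for all $\theta$ if and only if (ii) holds for all $\theta$, which proves the corollary.

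There is no real obstacle. The only point to check is that the scaled sequence $\theta s_\lambda$ still tends to zero. Nonnegativity of $f_\lambda$ is inherited directly, and no uniformity in $\theta$ is needed because both statements quantify over $\theta$ outside the limit.
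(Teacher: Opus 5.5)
Your proposal is correct and matches the paper's approach: the paper states this corollary without a separate proof, as an immediate consequence of Lemma~\ref{lem:eqPointUnif}. That consequence is exactly your argument of applying the lemma, for each fixed $\theta>0$, to the scale $\theta s_\lambda\to 0$, and then quantifying over all $\theta$.
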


Moreover, for the sake of readability, we recall the uniform version~\cite[Lem.~1]{CP2024} of the Monotone Density Theorem; see~\cite[Thm.~1.7.2 and Thm.~1.7.5]{bingham} for the classical result.
\begin{lemma}\label{lem:MDT}
Let $(u_t(x))_{t\geq 0}$ be a one-parameter family of locally bounded functions that are non-increasing on $\R^+$ and such that $\lim_{x\to-\infty}u_t (x)=0$.
Suppose that there exist $\sigma>0$, $\alpha\in (0,1)$ and $(x_t)_{t\geq 0}$, with $x_t\xrightarrow{t\to\infty}\infty$, such that
\[
\lim_{t\to\infty}\sup_{x\geq x_t}\left|\frac{\int_{-\infty}^x u_t(y)\df y}{L(t) t^\sigma\ell(x)x^{\alpha}}-1 \right| =0,
\]
where $L, \ell$ are slowly varying functions at infinity. Then
\[
\lim_{t\to\infty}\sup_{x\geq (1+\eta)x_t}\left|\frac{u_t(x)}{\alpha L(t)t^\sigma\ell(x)x^{\alpha-1}}-1 \right| =0\qquad \forall\, \eta>0.
\]
Moreover, the same conclusion holds for a family of functions $(u_t(x))_{t\geq 0}$ non-decreasing on $\R^+$ with $\alpha\in(-\infty,0)\bigcup[1,\infty)$.
\end{lemma}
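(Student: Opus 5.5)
The plan is to adapt the classical sandwich proof of the Monotone Density Theorem (\cite[Thm.~1.7.2]{bingham}). The one extra point is to check that every error term is controlled uniformly in $x\geq(1+\eta)x_t$. Write $U_t(x)\coloneqq\int_{-\infty}^x u_t(y)\df y$ and $N_t(x)\coloneqq L(t)t^\sigma\ell(x)x^\alpha$, so the hypothesis reads $\sup_{x\geq x_t}|U_t(x)/N_t(x)-1|\to0$.

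\emph{Non-increasing case, $\alpha\in(0,1)$.} Fix $\eta>0$ and take $\delta\in(0,\eta/(1+\eta)]$. Then $x\geq(1+\eta)x_t$ implies $(1-\delta)x\geq x_t$, so the three points $(1-\delta)x$, $x$, $(1+\delta)x$ all lie in the region $[x_t,\infty)$ where the hypothesis applies. Since $u_t$ is non-increasing,
\[
U_t((1+\delta)x)-U_t(x)\;\leq\;\delta x\,u_t(x)\;\leq\;U_t(x)-U_t((1-\delta)x).
\]
Divide through by $\delta\,L(t)t^\sigma\ell(x)x^{\alpha}$ and write each $U_t(cx)$, $c\in\{1-\delta,1,1+\delta\}$, as $N_t(cx)(1+r_t(cx))$. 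Here $\sup_{y\geq x_t}|r_t(y)|\to0$. We also have $N_t(cx)/N_t(x)=c^\alpha\,\ell(cx)/\ell(x)$, and by the Uniform Convergence Theorem \cite[Thm.~1.2.1]{bingham}, together with $x_t\to\infty$, $\sup_{x\geq x_t}|\ell(cx)/\ell(x)-1|\to0$ for $c$ in the compact set $[1-\delta,1+\delta]$. Putting these together gives, uniformly in $x\geq(1+\eta)x_t$,
\[
\frac{(1+\delta)^\alpha-1}{\delta}-\frac{\varepsilon_t(\delta)}{\delta}\;\leq\;\frac{u_t(x)}{L(t)t^\sigma\ell(x)x^{\alpha-1}}\;\leq\;\frac{1-(1-\delta)^\alpha}{\delta}+\frac{\varepsilon_t(\delta)}{\delta},
\]
where $\varepsilon_t(\delta)\to0$ as $t\to\infty$ for each fixed $\delta$.

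The order of limits matters. First let $t\to\infty$ with $\delta$ fixed, which kills $\varepsilon_t(\delta)/\delta$. Then let $\delta\to0$: both difference quotients tend to $\alpha$. Because $\alpha\neq0$, dividing by $\alpha$ gives the stated conclusion. Shrinking $\delta$ is always admissible, since the constraint $\delta\leq\eta/(1+\eta)$ only bounds $\delta$ from above.

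\emph{Non-decreasing case, $\alpha<0$ or $\alpha\geq1$.} The monotonicity inequalities reverse, with $U_t(x)-U_t((1-\delta)x)\leq\delta x\,u_t(x)\leq U_t((1+\delta)x)-U_t(x)$. The same computation then gives the same two limits $\alpha$. When $\alpha<0$, dividing by $\alpha$ swaps which side is the upper and which the lower bound. Since we only need the absolute difference from $1$ to vanish, this swap is harmless.

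\emph{Main obstacle.} The only delicate point is uniformity in $x$. It rests on two facts: the hypothesis is a supremum over the whole half-line $[x_t,\infty)$, which contains the shifted points $(1\pm\delta)x$ once $x\geq(1+\eta)x_t$; and the slowly varying ratio $\ell(cx)/\ell(x)$ converges uniformly in $x\geq x_t$ because $x_t\to\infty$. This is exactly why the statement loses the factor $(1+\eta)$ in the threshold.
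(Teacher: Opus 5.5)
Your proposal is correct and follows essentially the route the paper relies on: the paper does not reprove this lemma but quotes it from \cite[Lem.~1]{CP2024}, which is the classical sandwich proof of the Monotone Density Theorem \cite[Thm.~1.7.2]{bingham} made uniform, with the factor $(1+\eta)$ serving exactly to keep $(1-\delta)x$ in $[x_t,\infty)$ as in your argument. Your order of limits ($t\to\infty$ before $\delta\to 0$) and your treatment of the sign swap for $\alpha<0$ are both sound; note only that pointwise slow variation at the two fixed multipliers $1\pm\delta$ already suffices, so the Uniform Convergence Theorem is not needed.
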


\section{More on Random Sums}\label{app:RS}
In this Appendix we clarify the  relationship between Examples~\ref{ex:IID} and~\ref{ex:RS}. Consider the control-cost process~\eqref{eq:CCprocess} introduced in Section~\ref{sec:RS}, where the increments $X_k$ and $Y_k$ are independent, and  define the counting (renewal) process $({\mathrm N}(t))_{t\geq 0}$ induced by the control process as ${\mathrm N}(t)\coloneqq \sup\{ n\in\N_0\,:\, S_n\leq t\}$. 
Denote by $\hat{\mathfrak h}_\textrm{count}(\lambda,s)$ the double Laplace transform associated with $(C_{{\mathrm N}(t)})_{t\geq 0}$ and by $\hat{\mathfrak h}(\lambda,s)$ the transform associated with $(C_{\tau(t)})_{t\geq 0}$. Since $\tau(t)=\mathrm{N}(t)+1$ and by independence, it follows that\footnote{We combine $\hat {\mathfrak h}(\lambda,s)=\hat p_Y(s)\hat{\mathfrak h}_\text{count}(\lambda,s)$ with~\eqref{eq:doubleRS}, recalling that $\hat {\mathfrak h}(\lambda,s)=\lambda^{-1}-s\hat{\mathfrak G}(\lambda,s)$ and similarly for $\hat{\mathfrak h}_\text{count}$.}
\begin{equation}\label{eq:count}
\hat{\mathfrak G}_\textrm{count}(\lambda,s)
= \hat p_X(\lambda) \hat{\mathfrak G}(\lambda,s),
\end{equation}
and thus, under the assumptions of Example~\ref{ex:RS}, we can write
\begin{equation}
\hat{\mathfrak G}_\textrm{count}(\lambda,s)= -\frac 1 {s\lambda} \sum_{k=1}^\infty \left( \frac{\hat p_X(\lambda)}{1-\hat p_X(\lambda)} a(s)\right)^k.
\end{equation}
By recalling the $\hat \cG(z,s)$ obtained in~\eqref{eq:doubleIID}
\begin{align*}
\hat \cG(z,s)
=\frac1{s(z-1)}\sum_{k=1}^\infty \left(\frac z {1-z}a(s)\right)^k,
\end{align*}
we immediately recognize that
\begin{align}\label{eq:IIDfromRS}
\hat {\mathfrak G}_\textrm{count} (\lambda,s)
= \frac{1-\hat p_X(\lambda)}\lambda \hat \cG(\hat p_X(\lambda),s).
\end{align}
The i.i.d.\ model of Example~\ref{ex:IID} can be now derived as a special case of~\eqref{eq:IIDfromRS} by choosing $p_X(x)=\delta(x-1)$, i.e., a degenerate deterministic distribution. Indeed, with this choice $\hat p_X(\lambda)=\exp{-\lambda}$ and, in agreement with the proof of Corollary~\ref{cor:Lap} (see~\eqref{eq:cor1}), we directly recover~\eqref{eq:doubleIID}.
We can therefore conclude that~\eqref{eq:count} is a generalization of~\eqref{eq:doubleIID}, with $\hat p_X(\lambda)$ acting the role of $z$.

Finally, we can  provide a rigorous comparison with the results of~\cite[\S~3.2.2]{CP2024} for generic non-deterministic $\hat p_X(\lambda)$. Starting from~\eqref{eq:count} and~\eqref{eq:RSbeta}, we have
\begin{align*}
\hat {\mathfrak G}_\textrm{count} (\lambda,s)
& = -\frac {a(s)}{s\lambda}\frac{\hat p_X(\lambda)}{1-\hat p_X(\lambda)}\left[1+\sum_{k=1}^\infty [a(s)]^k \left(\frac{\hat  p_X(\lambda)}{1-\hat p_X(\lambda)}\right)^k \right].
\end{align*}
 which can be equivalently expressed by performing a termwise inversion in $\lambda$, justified by the absolute convergence of the series and the decoupling from $a(s)$ at all orders.
It is not difficult to recognize that the leading-order coefficient $\frac{\hat p_X(\lambda)}{\lambda[1-\hat p_X(\lambda)]}$ becomes $\E({\mathrm N}(t))$~\cite[Eq.~(3.4)]{GL2001}, and similarly the remaining terms in the time domain can be controlled by the higher-order moments of ${\mathrm N}(t)$. Hence, we simply traced back\footnote{To make this comparison fully explicit, observe that $\hat {\mathfrak h}_\textrm{count} (\lambda,s) = \int_0^\infty \exp{-\lambda t} \phi_{{\mathrm N}(t)}(\hat p_Y(s))\df t$, where $\phi_{{\mathrm N}(t)}(z)\coloneqq \sum_{n=0}^\infty \P({\mathrm N}(t)=n) z^n$, as defined in~\cite{CP2024}.} to the analysis in~\cite[Asm.~2]{CP2024}, analogously to Example~\ref{ex:IID}.

\section{Two-sided shrinking window for the Tauberian region}\label{app:BoundedRegion}
The proof of Theorem~\ref{thm:doubleLaplace} can be easily adapted to a two-sided shrinking interval for the transform parameter \(s\). More precisely, choose two families $(s_\lambda^*)_{\lambda>0}$ and $(s_\lambda)_{\lambda>0}$  such that $0<s_\lambda^*<s_\lambda$ for all $\lambda >0$ and $s_\lambda\to 0$ as $\lambda \to 0$. If the hypothesis~\eqref{eq:hypLap} is modified so that the supremum is taken over the shrinking interval 
\[
s \in [\theta \, s_{\lambda}^*, \, \theta \, s_{\lambda}],\qquad \forall\theta > 0,
\]
then the statement of the theorem remains valid with a corresponding change for the range of the spatial variable
\[
x \in \left[1/s_{\frac 1t}, \, 1/s_{\frac 1 t}^*\right].
\]
In this sense, a large deviation principle  can hold in a region that diverges to infinity  but remains localized, and can be derived via a properly modified uniform Tauberian theorem.
The same observation readily applies to  the single-parameter framework of~\cite{CP2024}.
As an illustration of the utility of this regime, we will shortly demonstrate, through a simple example,  how  uniform control on a bounded $s$-interval can capture a
non-trivial large deviation result for a family of  distributions with sharp tails.
\newline

Consider the following family of truncated distributions with  truncation threshold that tends to infinity. For every $t>1$, let $X_t$ be a truncated Pareto random variable with probability density function 
\[
p_{X_t}(x)=
\begin{cases}
\frac{\beta}{ c_t} x^{-\beta-1}\qquad & \text{if}\quad x\in [1,t],\\
0\qquad &\text{otherwise},
\end{cases} \qquad\text{with}\quad c_t\coloneqq 1-t^{-\beta},\quad\beta\in(0,1).
\]
Its Laplace transform is
\[
\hat p_{X_t}(s)= \frac\beta{c_t} \int_1^t \exp{-sx} x^{-\beta-1}\,\df x
=\frac{\beta}{c_t} s^\beta [\Gamma(-\beta, s)-\Gamma(-\beta, ts)],
\]
where $\Gamma(-\beta,s)$ denotes the upper incomplete gamma function.
In the standard large deviation limit, given that $\P(X_t>x)=0$ if $x>t$, the moments of all orders exist and the Laplace transform must be an analytic function.
In fact, in the scaling regime $x\gg t$ as $t\to \infty$ , or equivalently $s\ll 1/t$, we observe that for all $(\tilde s_t)_{t\geq 0}$ such that $\tilde s_t\ll 1$ and $t \tilde s_t\ll1$ we have
\begin{align*} 
\hat p_{X_t}(\tilde s_t)
& = 1-\E_t(X) \tilde s_t+O(\tilde s_t^2)\quad\text{as}\quad t \to \infty,\quad\text{with}\quad\E_t(X)=\frac \beta {1-\beta} \frac {t^{1-\beta}-1}{c_t}.
\end{align*}
If we consider instead the intermediate region $1\ll x\ll t$ with $t\to \infty$, we must provide the asymptotic expansion of the incomplete gamma functions for $\tilde s_t\ll 1$ but with $t\tilde s_t\gg1$. Hence for $t\to \infty$ we get
\begin{align}
\hat p_{X_t}(\tilde s_t)
& = \left(\frac 1 {c_t}+\frac 1 {c_t} \beta \Gamma(-\beta)\tilde s_t^\beta+\frac 1 {c_t}\frac\beta{1-\beta} \tilde s_t+o(\tilde s_t)\right)-\left(\frac \beta {c_t}\frac{t^{-\beta-1}\exp{-t\tilde s_t}}{\tilde s_t}+o\left(t^{-\beta-1}\exp{-t\tilde s_t} \tilde s_t^{-1}\right)\right).\label{eq:truncPar}
\end{align}
We now apply the uniform Tauberian theorem~\cite[Thm.~1]{CP2024} to (similarly for $\beta\in(1,2)$, as in Example~\ref{ex:IID})
\(
\hat G_t(s)\coloneqq\frac{1-\hat p_{X_t}(s)}s,
\)
noting that by~\eqref{eq:truncPar} and Corollary~\ref{cor:eqPointUnif} the following assumption is satisfied
\[
\forall \theta>0,\qquad \lim_{t\to\infty} \sup_{s\in [\theta s_{t}^* , \theta s_{t}]}\left|\frac{c_t\hat G_t(s)}{-\beta\Gamma(-\beta)s^{\beta-1}}-1 \right|=0,
\]
where $s_{t}^*\gg 1/t$ and $s_{t}\ll 1$ as $t\to\infty$.
By Lemma~\ref{lem:MDT}, we finally recover
\[
\lim_{t\to\infty} \sup_{x\in [ 1/s_{t} , 1/s^*_{t}]}\left|\frac{\P(X_t>x)}{x^{-\beta}/c_t}-1 \right|=0.
\]
\newline

We next consider the model of Corollary~\ref{cor:Leap}, focusing on a different scaling window. In the complementary regime $1 \ll x \ll t$, corresponding to $t/x_t \to \infty$, one obtains
\[
\lim_{\lambda \to 0}\sup_{\theta s_\lambda \leq s < \varepsilon}
\left| \lambda s \hat{\mathfrak G}(\lambda, s) - 1 \right| = 0,
\qquad \forall \theta > 0,
\]
with $\varepsilon > 0$ arbitrarily small and $s_\lambda$ as in Corollary~\ref{cor:Leap}. By definition~\eqref{eq:doubleCost}, this immediately yields (as expected)
\[
\int_0^{x} h_t(c)\, dc \longrightarrow 0 \qquad \text{as } t \to \infty,\quad \text{for all }1\ll x\ll t,
\]
so that no Tauberian argument is required in this regime.

\section{An explicitly solvable example and further remarks}\label{app:ML}
In the setting of  Corollary~\ref{cor:Leap}, we  consider a special case, corresponding to Mittag-Leffler waiting times, following~\cite{BGS2023}. With this choice, not only does the joint transform admit an explicit inversion with respect to $\lambda$, but also the general framework reduces to a setting previously analysed in~\cite{CP2024} thereby providing a useful consistency check of  the results.

\begin{example}\label{ex:ML}
Let $p_X(x)$ be the Mittag--Leffler waiting time distribution, namely
\begin{equation}\label{eq:ML}
\hat p_X(s)=\frac 1{1+s^\beta},\quad\text{with}\quad \beta\in(0,1).
\end{equation}
Then
\begin{align*}
\hat h_t(s)= \mathcal L_\lambda^{-1}\{\hat {\mathfrak h}(\lambda,s)\}(t)
&=1-\frac{s^\beta}{1+s^\beta}\left[ 1+\cL^{-1}_\lambda \left\{\frac 1{\lambda(s+\lambda)^\beta}\right\}(t)\right],
\end{align*}
and recalling that the product of two Laplace transforms provides the convolution of the corresponding inverse Laplace transforms
\begin{align*}
\cL_{L_t}(s)&=\exp{st}\hat h_t(s)=\exp{st}\left[ 1-\frac{s^\beta}{1+s^\beta}\left(1+\int_0^t\frac{x^{\beta-1}\exp{-sx}}{\Gamma(\beta)}\df x \right)\right]\\
&=\left(1+\sum_{m=1}^\infty\frac{(st)^m}{m!}\right)\left[ 1-\frac{s^\beta}{1+s^\beta}\left( 1+\frac{t^\beta}{\Gamma(\beta+1)}+\sum_{m=1}^\infty (st^\beta)^m \frac{\Gamma(\beta+m)}{\Gamma(\beta)\Gamma(\beta+m+1)}\right)\right].
\end{align*}
Since we have direct control over $\mathcal L_{L_t}(s)$, we can apply both the standard and the uniform Tauberian theorems to obtain, respectively, the tail asymptotics and the large deviation result. For the latter, a direct computation using~\cite[Thm.~1]{CP2024} on the family\footnote{In particular, note that the error $\mathcal E_t(s)$ consists of terms that are monotone non-decreasing in their argument, hence taking the $\sup_{s\leq s_t}$ is equivalent to evaluating at~$s= s_t$.}
\[
\cL_{L_t}(s)=1- \frac{s^\beta t^\beta}{\Gamma(\beta+1)}+\mathcal E_t(s) \quad\text{with}\quad \lim_{t\to\infty}\sup_{s\in(0,\theta s_t]}\frac{|\mathcal E_t(s)|}{s^\beta t^\beta}=0\quad \forall \theta>0,
\]
where $(s_t)_{t\geq 0}$ is such that $t\cdot s_t\to 0$ as $t\to\infty$,
reproduces a special case of the statement of Corollary~\ref{cor:Leap}, whereas for the former we can verify consistency with the well-known result in the literature~\cite[Eq.~(26)]{Koren2007}\cite{Eliazar2004}. For fixed $t\geq0$, we can write
\[
\cL_{L_t}(s)= 1-s^\beta\left(1+\frac{ t^\beta}{\Gamma(\beta+1)}\right)+o(s^\beta)\quad\text{as}\quad s\to 0,
\]
which implies~\cite[Thm.~1.7.1 and Thm.~1.7.2]{bingham}
\begin{equation}\label{eq:BJ}
\P(L_t>x)\sim \frac {[t^\beta+\Gamma(\beta+1)] x^{-\beta}}{\Gamma(\beta+1)\Gamma(1-\beta)}\quad\text{as}\quad x\to \infty.
\end{equation}
In particular, by taking the derivative we recover the well-known probability density function
\[
\frac{\sin(\pi\beta)}\pi \frac{[t^\beta+\Gamma(\beta+1)]}{x^{\beta+1}},
\]
and in the limit $t\to 0$ the leapover (or first-ladder height) for a one-sided random walk coincides with the first increment $X_1$, indeed $\P(L_t>x)\sim x^{-\beta}/\Gamma(1-\beta)$ as $x\to\infty$ according to~\eqref{eq:ML}.
\end{example}

Observe that the relevance of the uniform Tauberian method stems from the fact that the main difficulty is not the mere existence of a large deviation region, but rather the identification of its precise form. 
In this respect, our uniform Tauberian technique offers an alternative route to the classical probabilistic approach, for explicitly identifying the threshold sequence $(x_t)_{t\geq 0}$, by moving to the Laplace--Stieltjes domain.
To illustrate this distinction more concretely, we leverage on Example~\ref{ex:ML}: the  family $(L_t)_{t\ge0}$  satisfies the following fixed-parameter asymptotics
\begin{equation}\label{eq:BJtoLD}
\forall t\geq 0, \quad \forall \delta>0,\quad \exists x_{t,\delta}\xrightarrow{\delta\to 0}{\infty} \quad\text{such that}\quad \\
\left| \frac{\P(L_t> x_{t,\delta})}{C_t  x_{t,\delta}^{-\beta}}-1\right|<\delta,
\end{equation}
with $C_t=\frac{[t^\beta+\Gamma(\beta+1)]}{\Gamma(\beta+1)\Gamma(1-\beta)} $.
As the result holds for any $\delta=\delta(t)\to 0$ as $t\to\infty$, this provides the existence of a large deviation region by means of the single-parameter equivalent of Lemma~\ref{lem:eqPointUnif}, that is~\cite[Lem.~3]{CP2024}; see also~\cite[Thm.~1.1]{ClineHsing}.
By means of our uniform Tauberian theorems, we can explicitly identify a large deviation region (defined by $x \ge x_t$ and $t/x_t \to 0$ as $t \to \infty$, as seen in Example 3) that cannot otherwise be extracted using only the information in \eqref{eq:BJtoLD}.

As a final comment, we observe that the shrinking window introduced in Appendix~\ref{app:BoundedRegion} breaks the  correspondence between the univariate tail asymptotics and the large deviation framework. In the former, the relevant region is necessarily a half-line; however, in the large deviation setting, the presence of an increasing parameter allows for a different decomposition of the asymptotic regime. 

\bigskip
{\bf Acknowledgements}
This research is part of the authors' activity within the UMI Group ``DinAmicI'' (www.dinamici.org).
The research is part of GC's activity within GNFM/INdAM.

\bigskip

\appendix

 \bibliographystyle{plain}
\bibliography{biblio}

\end{document}